\def\marginpar#1{\ignorespaces}
\DeclareMathOperator\argmin{argmin}
\newtheorem{theorem}{Theorem}[section]
\newtheorem{lemma}[theorem]{Lemma}
\newtheorem{proposition}[theorem]{Proposition}
\newtheorem{corollary}[theorem]{Corollary}
\newtheorem{definition}[theorem]{Definition}
\newtheorem{ass}[theorem]{Assumption}
\newtheorem{conj}[theorem]{Conjecture}
\numberwithin{equation}{section}
\begin{document}
\title[Exponential ergodicity of GRBM]{Exponential ergodicity and convergence for generalized reflected Brownian motion}

\author[Wenpin Tang]{{Wenpin} Tang}
\address{Department of Mathematics, University of California, Los Angeles. Email: 
} \email{wenpintang@math.ucla.edu}

\date{\today} 

\begin{abstract}
In this paper we provide convergence analysis for a class of Brownian queues in tandem by establishing an exponential drift condition.
A consequence is the uniform exponential ergodicity for these multidimensional diffusions, including the O'Connell-Yor process.
A list of open problems are also presented. 
\end{abstract}
\maketitle
\textit{Key words :} Brownian queue, exponential ergodicity, Foster-Lyapunov stability, O'Connell-Yor process, reflected Brownian motion.
\maketitle

\textit{AMS 2010 Mathematics Subject Classification:} 60H10, 60J60, 60K35.
%\setcounter{tocdepth}{1}
%\tableofcontents
%-------------------------------------------------------------------------------------------------
\section{Introduction and main results}
%----------------------------------------------------------------------------------------
\quad This paper is concerned with the convergence to global equilibrium for a large class of multidimensional Brownian diffusions. 
We consider the following $d$-dimensional stochastic differential equation (SDE):
\begin{equation}
\label{SDE}
dX_t=dB_t+ \left(\mu+\sum_{i=1}^d U'(n_i \cdot X_t)r_i\right) \mbox{d} t \quad \mbox{and} \quad X_0 \in \mathbb{R}^d,
\end{equation}
where
\begin{itemize}[itemsep = 4 pt]
\item
$(B_t;\, t \geq 0)$ is a $d$-dimensional Brownian motion with covariance matrix $\Gamma$;
\item
$\mu :=(\mu_1, \ldots, \mu_d)^T\in \mathbb{R}^d$ is a vector of drifts;
\item
$n_1, \ldots, n_d \in \mathbb{S}^{d-1}$ are unit vectors; 
\item
$r_1, \ldots, r_d \in \mathbb{R}^d$ are vectors of reflection and $R:=(r_i)_{1 \leq i \leq d} $ is the reflection matrix;
\item
 $U: \mathbb{R} \rightarrow \mathbb{R}$ is a smooth potential function such that $U' \geq 0$. 
\end{itemize}

The SDE \eqref{SDE} was previously considered by O'Connell and Ortmann \cite{OCO}, the strong solution to which is called a {\em generalized reflected Brownian motion} parametrized by $(\Gamma, \mu, R, U)$, or simply {\em GRBM}$(\Gamma, \mu, R, U)$.  
In the sequel, let $||\Gamma||:=\sup\{||\Gamma x||; ||x||=1\}$ be the spectral norm of $\Gamma$.

\subsection{Motivations}
The study of GRBMs is motivated from both queueing theory and interacting particle systems. 
The simplest model in queueing networks is the {\em $M/M/1$ queue},
where customers arrive according to a Poisson process with rate $\lambda$, and are served according to exponential times with rate $\theta$. 
It is well known that if $\lambda < \theta$, then the number of customers converges to a geometric random variable with parameter $\lambda/\theta$.
{\em Queues in tandem} are consecutive $M/M/1$ queues. 
Harrison \cite{Har73,Har78} proposed an approximate model of queues in the {\em heavy traffic limit}, as the {\em traffic intensity} $\lambda/\theta$ tends to $1$.
That is,
\begin{equation}
\label{SDE2}
X_t=B_t+\mu t +RL_t  \in [0,\infty)^d,
\end{equation}
where $(B_t; \, t \ge 0)$, $\mu$ and $R$ are defined as above, and $L=(L^i_t;\, t \geq 0)_{1 \leq i \leq d}$ is the local time process satisfying that for all $1 \leq i \leq d$,
\begin{itemize}[itemsep = 4 pt]
\item
$L^i$ is continuous and non-decreasing with $L^i_0=0$,
\item
$L^i$ increases only at times $t$ such that $X^i_t=0$.
\end{itemize}

\quad Call the strong solution to \eqref{SDE2} a {\em semimartingale reflected Brownian motion} parametrized by $(\Gamma, \mu, R)$, or simply {\em SRBM}$(\Gamma, \mu, R)$. 
The limit theorems were proved in \cite{Reiman}.
The SDE \eqref{SDE2} was also called the {\em semimartingale reflected Brownian motion in the orthant} by Harrison and Reiman \cite{HR81b, HR81}.
Let $Q$ be obtained by replacing each entry of $I - R$ by its absolute value.
Dupuis and Ishii \cite{DI91} proved that if $||Q|| < 1$, then \eqref{SDE2} has a unique strong solution.
Taylor and Williams \cite{TW93} showed that \eqref{SDE2} has a weak solution which is unique in law if and only if $R$ is completely-$\mathcal{S}$.
The positive recurrence of SRBMs was explored by Dupuis and Williams \cite{DW94}, Chen \cite{Chen96}, and Bramson, Dai and Harrison \cite{Bra11,Bra10,DH12}.
The exponential ergodicity of SRBMs was studied by Budhiraja and Lee \cite{BL07}, Sarantsev \cite{Sar17}, and Blanchet and Chen \cite{BC16}. 
See also Williams \cite{WilliamsO} for a survey.
Recently, SRBMs appear to be a useful tool in the study of rank-dependent Brownian systems, see \cite{BFK,IPBKF,sara1,Shk,TT18}.

\quad The local time process $L$ is often not easy to study. 
A common technique, called the {\em penalty method}, is to substitute the local time process with some drift term which pushes the process inside the domain. 
The choice $U(y) = - e^{-y}$ in \eqref{SDE} corresponds to the {\em generalized Brownian queue in tandem}, introduced by O'Connell and Yor \cite{OY}.
This extends earlier work on heavy traffic queues in tandem by Glynn and Whitt \cite{GW}, and Harrison and Williams \cite{HW92}.
By introducing a parametric family of potentials $U_{\beta}(y)=-\frac{1} {\beta}e^{-\beta y}$  and letting $\beta \rightarrow \infty$, we get the SRBM as weak limit of GRBMs, following the work of Lions and Sznitman \cite{LS84}. 
In the particle system literature, the O'Connell-Yor process is also called the {\em semi-discrete Brownian polymer}, 
which generalizes the low-density limit of a totally asymmetric simple exclusion process ({\em Brownian TASEP}).
This polymer model was proved to be exactly solvable \cite{OC12}, and belongs to the KPZ universality class \cite{BC14,BCF}.

\quad O'Connell and Ortmann \cite[Corollary $4.11$]{OCO} proved that under the {\em generalized skew-symmetry condition}
\begin{equation}
\label{SS}
r_{ij}+r_{ji}=2 \Gamma_{ij} \quad \mbox{for}~1 \leq i \ne j \leq d,
\end{equation}
and under sufficient regularity for $U$,
GRBM$(\Gamma,\mu,R,U)$ has a product-form stationary distribution
\begin{equation}
\label{densityGRBM}
p(x)= \exp \left\{ 2 \left[\sum_{i=1}^d U(x_i)+(2 \Gamma-R)^{-1} \mu \cdot x \right] \right\},
\end{equation}
provided that $\int_{\mathbb{R}^d} p(x)dx < \infty$.
This result is an analog of Williams \cite{WilliamsP,WilliamsO} regarding SRBMs.
Kang and Ramanan \cite{KR14} characterized stationary distributions of reflected diffusions with state-dependent drifts.
L\'epingle \cite{Lep} considered a two-dimensional GRBM with logarithmic potential.

\quad The formula \eqref{densityGRBM} suggests that the GRBM converge exponentially to its stationary distribution. 
The intuition comes from the Poincar\'{e} inequality, see \cite{BCG,CG17} for connections between functional inequalities and rate of convergence for Markov processes.
However,
\begin{itemize}[itemsep = 4 pt]
\item
the GRBM defined by \eqref{SDE} is not necessarily time-reversible, or symmetric;
\item
the stationary distribution of the GRBM has an explicit form only under the generalized skew-symmetric condition \eqref{SS}.
\end{itemize}
So we cannot apply the Poincar\'{e} inequality directly. 
A natural question is whether the rate of convergence is exponential under general conditions. 
The main tool is stochastic stability theory of Markov processes which we recall in Section \ref{s2}.

\subsection{Main results}
For any signed Borel measure $\mu$ on $\mathbb{R}^d$, we define the total variation norm by
$$||\mu||_{TV}:=\underset{|g| \leq 1}{\sup} \int _{\mathbb{R}^d} g d \mu.$$
\begin{definition}
Assume that a $\mathbb{R}^d$-valued Markov process $(Z_t; \,t \geq 0)$ with transition kernel $P^t$ has a unique stationary distribution $\pi$. If there exist $C, \, \delta>0$ and $W: \mathbb{R}^d \rightarrow [1,\infty)$ such that for all $x \in \mathbb{R}^d$ and $t \geq 0$,
\begin{equation}
||P^t(x,\cdot)-\pi(\cdot)||_{TV} \leq C \, W(x)\exp(-\delta t),
\end{equation}
then $(Z_t; \,t \geq 0)$ is said to be $W$-uniformly ergodic with exponent $\delta$.
\end{definition}

\quad Note that the SDE \eqref{SDE} does not always have a unique strong solution unless we impose additional conditions on the input data $(\Gamma, \mu, R, U)$. 
See \cite[Section V]{RW} for background on solutions to SDEs. 
It is well known that the SDE \eqref{SDE} has a strong solution which is pathwise unique under the following conditions.
\begin{enumerate}[itemsep = 4 pt]
\item
$\Gamma$ is strictly positive definite. That is, there exists $\lambda>0$ such that
\begin{equation}
\label{PD}
\xi^{T}\Gamma\xi \geq \lambda ||\xi||^2 \quad \mbox{for all}~ \xi \in \mathbb{R}^d.
\end{equation}
\item
$U'$ is locally Lipschitz. That is, there exists $K_R>0$ such that
\begin{equation}
|U'(y)-U'(z)| \leq K_R|y-z| \quad \mbox{for all}~ |y| \leq R~\mbox{and}~|z| \leq R.
\end{equation}
\item
There exists $K>0$ such that 
\begin{equation}
\label{addcond1}
x \cdot \left(\mu+\sum_{i=1}^d U'(x_i)r_i\right) \leq K(1+||x||^2)  \quad \mbox{for all}~ x=(x_1, \cdots, x_d)\in \mathbb{R}^d.
\end{equation}
\end{enumerate}
Moreover, the unique strong solution is Feller continuous and strong Markov.
The infinitesimal generator of GRBM$(\Gamma, \mu, R, U)$ is given by
\begin{equation}
\label{IG}
\mathcal{L}:=\frac{1}{2} \sum_{i,j=1}^d \Gamma_{ij} \frac{\partial^2}{\partial x_i \partial x_j}+ \sum_{i=1}^d \left(\mu_i+\sum_{j=1}^d U'(x_j) r_{ij}\right)\frac{\partial}{\partial x_i}.
\end{equation}

\quad By proper scaling, we assume that the diagonal entries of the reflection matrix $R$ are all equal to $1$, i.e. 
$r_{ii}=1$ for $1 \leq i \leq d$. 
To prove the rate of convergence for a GRBM, we make the following assumptions on the input data. 

\begin{ass}
\label{hypo}
\begin{enumerate}[itemsep = 4 pt]
\item
The covariance matrix $\Gamma$ is strictly positive definite.
\item
The reflection matrix $R$ is such that $r_{ii}=1$ for $1 \leq i \leq d$, $r_{i,i-1}=-1$ for $2 \leq i \leq d$ and $r_{ij}=0$ for $|i-j| \geq 2$. That is,
\begin{equation}
\label{trid}
R=\begin{pmatrix} 
1 & 0 & 0& \cdots &0\\ 
-1 & 1 & 0& \cdots &0 \\ 
\vdots & \ddots & \ddots& \ddots &\vdots \\ 
0 & \cdots& -1 &  1 & 0\\
 0 & \cdots & 0 &-1& 1\\ 
\end{pmatrix},
\end{equation}
\item
The potential $U$ satisfies that
\begin{enumerate}[itemsep = 4 pt]
\item
$U' \geq 0$ is continuous on $\mathbb{R}$, and is decreasing on $\mathbb{R}_{-}$,
\item
$U'(y) \rightarrow 0$ as $y \rightarrow \infty$, and $U'(y) \rightarrow \infty$ as $y \rightarrow -\infty$,
\item
For $a>b>0$, $U'(ay)/U'(by) \rightarrow \infty$ as $y \rightarrow -\infty$.
\end{enumerate}
\end{enumerate}
\end{ass}
The form \eqref{trid} of the reflection matrix comes from the application to queues in tandem, and the gap process of the Brownian TASEP. 
In particular, Assumption \ref{hypo} is satisfied with $\Gamma=I$ and $U(x)=-e^{-x}$, corresponding to the O'Connell-Yor process.
See Section \ref{s5} for further discussion.

\quad Under Assumption \ref{hypo}, the infinitesimal generator of GRBM$(\Gamma,\mu,R,U)$ simplifies to
\begin{equation}
\label{IGexp}
\mathcal{L}:=\frac{1}{2} \sum_{i,j=1}^d \Gamma_{ij} \frac{\partial^2}{\partial x_i \partial x_j}+ \sum_{i=1}^d \left(\mu_i+U'(x_i)-U'(x_{i-1})\right)\frac{\partial}{\partial x_i},
\end{equation}
with the convention $x_0=\infty$. 
For a measurable set $C \subset \mathbb{R}^d$, write $1_C$ for the indicator function of $C$.
For $r>0$, let $\mathcal{B}_r:=\{x \in \mathbb{R}^d;\, ||x|| \leq r\}$ be the closed ball of radius $r$ centered at the origin.
A function $V: \mathbb{R}^d \rightarrow [1,\infty)$ is said to be norm-like if it is at least twice continuously differentiable, and $V(x) \rightarrow \infty$ as $||x|| \rightarrow \infty$.

\quad The main result below establishes an {\em exponential drift condition} for GRBM$(\Gamma,\mu,R,U)$.

\begin{theorem}
\label{keylem}
Let $\mathcal{L}$ be defined by \eqref{IGexp}. 
Assume that the input data $(\Gamma,\mu,R,U)$ satisfy Assumption \ref{hypo}, and 
$\mu=(\mu_1, \ldots, \mu_d) <0$. 
Then there exist  $b<\infty$, and a norm-like function $V: \mathbb{R}^d \rightarrow [1,\infty)$ such that for arbitrary small $\epsilon>0$,
\begin{equation}
\label{GDCbis}
\mathcal{L}V \leq -\frac{1}{2d||\Gamma||}\left(\min_{1 \leq i\leq d}|\mu_i|^2-\epsilon\right)V+b1_{\mathcal{B}_r} \quad \mbox{for}~r~\mbox{large enough}.
\end{equation}
\end{theorem}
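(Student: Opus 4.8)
The plan is to verify \eqref{GDCbis} by exhibiting an explicit exponential Lyapunov function and reducing the drift inequality to a pointwise estimate on $\mathcal{L}V/V$ outside a large ball. Once a norm-like $V$ is found with $\mathcal{L}V(x) \le -\kappa V(x)$ for all $\|x\| > r$, where $\kappa = \tfrac{1}{2d\|\Gamma\|}(\min_i|\mu_i|^2 - \epsilon)$, the term $b\,\mathbf 1_{\mathcal B_r}$ is produced automatically by setting $b := \sup_{x \in \mathcal B_r}(\mathcal{L}V(x) + \kappa V(x))$, which is finite because $V$ is $C^2$ and the coefficients of $\mathcal{L}$ in \eqref{IGexp} are continuous. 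So everything reduces to the pointwise bound at infinity. I would take $V = e^{f}$ for a smooth norm-like $f$, so that, writing $b_i(x) = \mu_i + U'(x_i) - U'(x_{i-1})$,
\[
\frac{\mathcal{L}V}{V} = \tfrac12 (\nabla f)^{T}\Gamma\,\nabla f + \tfrac12\sum_{i,j}\Gamma_{ij}\,\partial_{ij}f + \sum_i b_i\,\partial_i f .
\]
The design principle is to make $f$ asymptotically \emph{linear} in every direction along which the process escapes to infinity, so that the Hessian term $\sum_{ij}\Gamma_{ij}\partial_{ij}f$ vanishes at infinity and only the quadratic form $\tfrac12(\nabla f)^{T}\Gamma\nabla f$ and the drift term survive.

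The positive directions dictate the constant. Where all coordinates are large and positive, Assumption \ref{hypo}(3b) gives $U'(x_i)\to 0$ and $b_i \to \mu_i$, so $f$ should look like $\lambda(x_1+\cdots+x_d)$ there. With $\nabla f = \lambda\mathbf 1$ and the crude spectral bound $\mathbf 1^{T}\Gamma\mathbf 1 \le d\|\Gamma\|$, the leading term is $\tfrac12\lambda^2 d\|\Gamma\| + \lambda\sum_i\mu_i$, minimized at $\lambda = -\sum_i\mu_i/(d\|\Gamma\|) > 0$ with value $-(\sum_i\mu_i)^2/(2d\|\Gamma\|) \le -\min_i\mu_i^2/(2d\|\Gamma\|)$. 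This is exactly the target rate, the slack $\epsilon$ absorbing the $o(1)$ contributions of the $U'$ and Hessian terms for $r$ large. The tridiagonal form \eqref{trid} of $R$ is what makes the drift telescope: the drift of the partial sum $x_1+\cdots+x_k$ is $\sum_{i\le k}\mu_i + U'(x_k)$ (using $U'(x_0)=0$), and this identity governs the behaviour of $\mathcal{L}$ on functions of these sums.

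The obstacle is the directions in which some coordinate tends to $-\infty$. There $U'(x_i)\to\infty$ by \ref{hypo}(3b), so to keep $V$ norm-like I would append to $f$ terms that grow as $x_i\to-\infty$; the self-drift contribution $U'(x_i)\,\partial_i f$ then yields an arbitrarily strong negative term and imposes no constraint on the rate, which is favorable. The genuine difficulty is the cascade term $-U'(x_{i-1})\,\partial_i f$ coming from the reflection: when coordinate $i-1$ is far more negative than coordinate $i$, $U'(x_{i-1})$ can swamp $U'(x_i)$ and the sign turns unfavorable. This is precisely where Assumption \ref{hypo}(3) enters — monotonicity of $U'$ on $\mathbb R_-$ orders these terms, while the super-exponential growth in (3c), $U'(ay)/U'(by)\to\infty$, guarantees that the large self-drift $U'(x_{i-1})$ attached to the $(i-1)$-coordinate of $V$ dominates the unfavorable neighbour term $U'(x_{i-1})$ attached to the $i$-coordinate, after a summation-by-parts/ordering argument across the coordinates. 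I expect this cascade estimate to be the technical heart of the proof.

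To assemble, I would partition $\mathbb R^d$ into finitely many regions according to which coordinates are large positive versus driven to $-\infty$, verify $\mathcal{L}V/V \le -\kappa$ on the unbounded part of each region by the two mechanisms above, and observe that the finitely many transition zones are bounded and hence swept into $\mathcal B_r$ and the constant $b$. Matching $V$ smoothly across regions so that it remains $C^2$ and norm-like is routine once the asymptotic profiles are fixed; the only quantitative input to the rate is the positive-orthant computation, which is exactly why the exponent is $\tfrac{1}{2d\|\Gamma\|}(\min_i|\mu_i|^2-\epsilon)$.
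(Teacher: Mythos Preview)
Your Lyapunov candidate differs from the paper's: you take $f$ asymptotically linear, $f\sim\lambda\sum_i x_i$ plus corrections in the negative directions, whereas the paper uses a single radial function $V(x)=\exp\bigl(\lambda\phi(\|x\|)\bigr)$ with $\phi(s)=s$ for $s\ge 1$, and reduces \eqref{GDCbis} to showing that
\[
\beta_d(x):=\sum_{i=1}^d\bigl(\mu_i+U'(x_i)-U'(x_{i-1})\bigr)\frac{x_i}{\|x\|}\le -\min_{1\le i\le d}|\mu_i|+\epsilon
\]
for all $\|x\|$ large. This bound is proved by induction on $d$: one splits on the last index $i^{+}$ with $x_{i^{+}}\ge 0$, uses the induction hypothesis on the two blocks $(x_1,\ldots,x_{i^{+}-1})$ and $(x_{i^{+}},\ldots,x_d)$, and in the all-negative orthant performs an $\argmin$ reduction together with the telescoping identity $\sum_k(x_k-x_{k+1})U'(x_k)+x_dU'(x_d)$. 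No region-by-region construction of $V$ is needed.

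Your proposal has a concrete gap already in the positive orthant. With $\nabla f=\lambda\mathbf 1$ the drift term telescopes to $\lambda\sum_i b_i=\lambda\bigl(\sum_i\mu_i+U'(x_d)\bigr)$, and you tacitly use $U'(x_d)\to 0$. But ``all coordinates large and positive'' does not cover every ray to infinity in the closed orthant: for $d\ge 2$ take $x=t\,e_j$ with $1\le j\le d-1$. Then $\|x\|\to\infty$ while $x_d=0$, so $U'(x_d)=U'(0)$ is a fixed positive constant and
\[
\frac{\mathcal{L}V}{V}\longrightarrow \tfrac12\lambda^2\,\mathbf 1^{T}\Gamma\mathbf 1+\lambda\Bigl(\sum_i\mu_i+U'(0)\Bigr).
\]
For the O'Connell--Yor potential $U'(0)=1$, so whenever $\sum_i|\mu_i|<1$ this limit is strictly positive for every $\lambda>0$ and the drift inequality fails outside every ball. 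The terms you propose to append for $x_i\to-\infty$ do nothing here, since all coordinates are $\ge 0$; your dichotomy ``large positive versus driven to $-\infty$'' simply omits the bounded-coordinate rays, and these cannot be absorbed into $\mathcal B_r$. The paper's radial choice avoids this obstruction because $\partial_i\|x\|=x_i/\|x\|$ gives weight $0$ to the bounded coordinates: along $x=t\,e_j$ only the $j$th summand of $\beta_d$ survives and $\beta_d(x)\le\mu_j\le-\min_i|\mu_i|$. Thus your assertion that the rate is dictated solely by the deep-positive-orthant computation, and that the patching is routine, is not justified; with the linear profile the positive orthant itself is not under control.
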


\quad The proof of Theorem \ref{keylem}, which involves an induction over $d$, will be given in Section \ref{s4}.
The function $V$ is called the {\em Lyapunov function}.
The main difficulty is to prove the estimate \eqref{GDCbis} for $x:=(x_1,\ldots, x_d) < 0$,
 in which case 
$\mu_i + U'(x_i) - U'(x_{i-1})$ can either be large positive or large negative since $U'(y) \rightarrow \infty$ as $y \rightarrow - \infty$.
Note that this is the case for the O'Connell-Yor process.
The key to analysis is to estimate $\beta_d(x)$ defined by \eqref{betad}, and
the special form \eqref{trid} of $R$ and the monotonicity of $U'$ lead to manageable estimates via telescopic sums \eqref{201}--\eqref{18}.

\quad A consequence of Theorem \ref{keylem} is the uniform exponential ergodicity for GRBM$(\Gamma,\mu,R,U)$, which we prove in Section \ref{s2}.
As a byproduct, we obtain a tail estimate of the stationary distribution of GRBM$(\Gamma,\mu,R,U)$.
\begin{corollary}
\label{mainbis}
Under the assumptions of Theorem \ref{keylem}, GRBM$(R, \mu, \Gamma, U)$ defined as the strong solution to \eqref{SDE}, has a unique stationary distribution $\pi$, and is uniformly exponentially ergodic. 
That is, let $P^t$ be the transition kernel of GRBM$(R,\mu,\Gamma,U)$, then there exist 
$$W: \mathbb{R}^d \rightarrow [1,\infty) \quad \mbox{and} \quad C(d) > 0,$$
such that
\begin{equation}
||P^t(x, \cdot)-\pi(\cdot)||_{TV} \leq W(x) \exp(-C(d) \, t).
\end{equation}
Moreover, the stationary distribution $\pi$ satisfies the tail estimate
\begin{equation}
\int_{\mathbb{R}^d} V(x) d\pi(x) < \infty.
\end{equation}
\end{corollary}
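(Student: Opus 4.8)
The plan is to derive the corollary from the exponential drift condition \eqref{GDCbis} by invoking the Foster--Lyapunov stability theory for continuous-time Markov processes developed by Meyn and Tweedie. The inequality \eqref{GDCbis} is precisely a \emph{geometric drift condition} of the form $\mathcal{L}V \leq -\alpha V + b\,1_{\mathcal{B}_r}$ with $\alpha := \frac{1}{2d||\Gamma||}(\min_{1\leq i \leq d}|\mu_i|^2 - \epsilon) > 0$ and compact sublevel set $\mathcal{B}_r$. The abstract theory then yields both the existence and uniqueness of the stationary distribution $\pi$ and the $V$-uniform exponential convergence, once the standard reachability hypotheses are in place.

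First I would verify the qualitative hypotheses needed to apply the theory: that the process is $\psi$-irreducible and aperiodic, and that every compact set is petite. Here the strict positive definiteness of $\Gamma$ in Assumption \ref{hypo}(1) is essential. Since the diffusion coefficient is nondegenerate and the drift coefficients in \eqref{IGexp} are locally Lipschitz (hence locally bounded), the transition kernel $P^t(x,\cdot)$ admits a strictly positive smooth density for every $t > 0$, by parabolic regularity together with a Stroock--Varadhan support theorem argument. This yields irreducibility with respect to Lebesgue measure, aperiodicity, and the petiteness of all compact sets; in particular $\mathcal{B}_r$ is petite. Combined with the Feller and strong Markov properties already recorded in the excerpt, the process is a $T$-process to which the theory applies.

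Next I would apply the main $V$-uniform ergodicity theorem of Meyn and Tweedie (\emph{Stability of Markovian processes III}): a geometric drift condition with a petite sublevel set implies that the process admits a unique invariant probability measure $\pi$, that $V$ is $\pi$-integrable, and that there exist constants $C, \delta > 0$ with $||P^t(x,\cdot) - \pi(\cdot)||_{TV} \leq C\,V(x)\,e^{-\delta t}$. Setting $W := C \cdot V$ and $C(d) := \delta$ yields the stated bound. The rate $\delta$ is controlled by $\alpha$, and hence depends on $\min_{1\leq i\leq d}|\mu_i|^2$, $||\Gamma||$ and the dimension $d$, which explains the notation $C(d)$.

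Finally, the tail estimate $\int_{\mathbb{R}^d} V\,d\pi < \infty$ follows by integrating \eqref{GDCbis} against the invariant measure. Stationarity gives $\int \mathcal{L}V\,d\pi = 0$, so that $\alpha \int V\,d\pi \leq b\,\pi(\mathcal{B}_r) \leq b < \infty$, whence $\int V\,d\pi \leq b/\alpha < \infty$. The main obstacle, as is typical in this program, is not the abstract ergodic machinery but the rigorous verification of irreducibility and petiteness: one must check that the unbounded, merely locally Lipschitz drift (recall $U'(y) \to \infty$ as $y \to -\infty$) does not spoil the smoothness and positivity of the transition density, which is exactly where the nondegeneracy of $\Gamma$ and the local boundedness of the coefficients on compacts must be used with care.
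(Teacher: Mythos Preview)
Your proposal is correct and follows essentially the same route as the paper: both verify irreducibility and petiteness of compact sets (the paper cites Stramer--Tweedie to get a Lebesgue-irreducible $T$-process, you argue directly via nondegeneracy of $\Gamma$ and positivity of the transition density) and then feed the drift condition \eqref{GDCbis} into the Meyn--Tweedie machinery (Theorems~\ref{MTADR} and~\ref{dadada}) to conclude. Your explicit argument for the tail estimate $\int V\,d\pi \leq b/\alpha$ via $\int \mathcal{L}V\,d\pi = 0$ is a nice addition that the paper leaves implicit, though one should note that justifying this identity for the unbounded $V$ requires a truncation step.
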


\quad The exponential ergodicity of GRBM$(\Gamma, \mu, R, U)$ holds for $\mu < 0$, which is different from 
the exponential ergodic condition $R^{-1}\mu < 0$ for SRBM$(\Gamma, \mu, R)$
due to the presence of the potential $U$.
As explained in Section \ref{s2}, the dependence of $C(d)$ is notoriously difficult.
There is no simple way to get the exact rate $C(d)$ from the Lyapunov estimate \eqref{GDCbis}.
Blanchet and Chen \cite{BC16} gave an estimate of $C(d)$ for SRBMs.
Their result relies on a coupling at the minimal element $\{0\}$ in the positive orthant, which is not available for GRBMs.
Eberle, Guillin and Zimmer \cite{EGZ18} provided an estimate of $C(d)$ for general diffusions.
There a one-sided Lipschitz condition for the drift term is required, but this is not necessarily satisfied in our scenario.
In view of \eqref{GDCbis}, we make the following conjecture.
\begin{conj}
\label{cj6}
Under the assumptions of Theorem \ref{keylem}, GRBM$(R, \mu, \Gamma, U)$ is uniformly exponentially ergodic with exponent of order $1/d$. That is, there exist 
$$W: \mathbb{R}^d \rightarrow [1,\infty) \quad \mbox{and} \quad C > 0$$
such that
\begin{equation}
||P^t(x,\cdot)-\pi(\cdot)||_{TV} \leq W(x)\exp\left(-\frac{Ct}{d}\right).
\end{equation}
\end{conj}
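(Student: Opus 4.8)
The plan is to exhibit an explicit exponential Lyapunov function $V=\exp(\phi)$, with $\phi$ smooth and norm-like, for which the normalized drift $\beta_d:=\mathcal{L}V/V$ satisfies $\beta_d\le -c$ off a large ball, where $c=\tfrac{1}{2d\|\Gamma\|}(\min_i|\mu_i|^2-\epsilon)$; the inequality \eqref{GDCbis} then follows (with $V$ normalized so that $V\ge1$) by setting $b:=\sup_{x\in\mathcal{B}_r}(\mathcal{L}V(x)+cV(x))<\infty$, which is finite since $\mathcal{L}V$ and $V$ are continuous on the compact set $\mathcal{B}_r$. The decisive structural observation is that the reflection matrix \eqref{trid} is lower bidiagonal, so the drift of coordinate $i$, namely $\mu_i+U'(x_i)-U'(x_{i-1})$, depends only on $(x_{i-1},x_i)$. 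Hence for every $k$ the truncation $(x_1,\dots,x_k)$ is itself an autonomous $k$-dimensional GRBM of the same type, and this feed-forward (no-feedback) structure is exactly what permits the induction on $d$.

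I would take $\phi$ separable, $\phi(x)=\sum_{i=1}^d\psi_i(x_i)$, with each $\psi_i\in C^2$ chosen so that $\psi_i'(y)\nearrow\alpha_i>0$ as $y\to+\infty$ and $\psi_i'(y)\searrow-\gamma_i<0$ as $y\to-\infty$, which makes $\phi$ norm-like. For the base case $d=1$ one checks that as $x_1\to+\infty$ one has $\beta_1\to\tfrac12\Gamma_{11}\alpha_1^2+\mu_1\alpha_1$, minimized at $\alpha_1=|\mu_1|/\Gamma_{11}$ with value $-|\mu_1|^2/(2\Gamma_{11})$, while as $x_1\to-\infty$ the term $U'(x_1)\psi_1'(x_1)\approx-\gamma_1 U'(x_1)\to-\infty$ by Assumption \ref{hypo}(3); thus $\beta_1\le-(|\mu_1|^2-\epsilon)/(2\Gamma_{11})$ off a compact set. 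For the inductive step I set $\phi_d=\phi_{d-1}+\psi_d(x_d)$, so that the block structure of $\Gamma$ and of the Hessian yields the clean decomposition
\[
\beta_d=\beta_{d-1}+\tfrac12\Gamma_{dd}\bigl((\psi_d')^2+\psi_d''\bigr)+\bigl(\mu_d+U'(x_d)-U'(x_{d-1})\bigr)\psi_d'(x_d)+\psi_d'(x_d)\sum_{i<d}\Gamma_{id}\,\partial_i\phi_{d-1}.
\]
Here $\beta_{d-1}\le-c_{d-1}$ by the inductive hypothesis, the diffusion and cross terms are bounded because $\nabla\phi_{d-1}$ and $\psi_d',\psi_d''$ are bounded, and the only genuine coupling is the term $-U'(x_{d-1})\psi_d'(x_d)$.

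The main obstacle, as flagged, is the region $x<0$, where $U'(x_i)\to\infty$ makes $-U'(x_{d-1})\psi_d'(x_d)$ large positive precisely when $x_d\to-\infty$ (so $\psi_d'\to-\gamma_d$) and $x_{d-1}\to-\infty$. My resolution is to order the confinement rates strictly, $\gamma_1>\gamma_2>\cdots>\gamma_d>0$. Telescoping $\sum_i(U'(x_i)-U'(x_{i-1}))\psi_i'(x_i)$ shows that the coefficient of each large quantity $U'(x_j)$ in $\beta_d$ is $\psi_j'(x_j)-\psi_{j+1}'(x_{j+1})$ for $j<d$ (and $\psi_d'(x_d)$ for $j=d$), which is $\le-\gamma_j+\gamma_{j+1}<0$ whenever $x_j\to-\infty$, since $\psi_{j+1}'\ge-\gamma_{j+1}$. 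Thus every $U'(x_j)$ enters with a net negative coefficient, so the coupling is absorbed; monotonicity of $U'$ on $\mathbb{R}_-$ and the asymptotic comparison in Assumption \ref{hypo}(3)(c) are used to make these sign estimates uniform over the sub-regions of $\{x<0\}$, organized by which coordinate is the more negative.

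In the complementary region, where no coordinate is very negative, the $U'$-terms are bounded and $\beta_d$ reduces, up to $o(1)$, to the quadratic $\tfrac12\nabla\phi^T\Gamma\nabla\phi+\mu\cdot\nabla\phi$; bounding the diffusion part by $\tfrac12\|\Gamma\|\,\|\nabla\phi\|^2\le\tfrac{d}{2}\|\Gamma\|\max_i(\psi_i')^2$ and optimizing the slope against the weakest drift coordinate is what produces the constant $\tfrac{1}{2d\|\Gamma\|}(\min_i|\mu_i|^2-\epsilon)$, the factor $1/d$ being the price of bounding the sum of $d$ squared slopes by $d$ times the maximum, i.e. of controlling all $d$ directions with a single norm-like function. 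I expect the genuinely delicate part to be the uniform bookkeeping of the telescoped coupling terms across the mixed regimes of $\{x<0\}$, rather than any single estimate; the constant itself is not claimed sharp, consistent with Conjecture \ref{cj6}.
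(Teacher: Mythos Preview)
The statement you are attempting to prove is a \emph{conjecture}: the paper does not prove it, and explicitly flags why it cannot be deduced from the available tools. What you have sketched is not a proof of Conjecture~\ref{cj6} but rather an alternative proof of Theorem~\ref{keylem}, the exponential drift condition~\eqref{GDCbis}. Establishing $\mathcal{L}V\le -cV+b\,1_{\mathcal{B}_r}$ with $c$ of order $1/d$ does \emph{not} by itself yield $\|P^t(x,\cdot)-\pi\|_{TV}\le W(x)\exp(-Ct/d)$. The Meyn--Tweedie machinery (Theorem~\ref{dadada}) converts a drift condition into exponential ergodicity, but the resulting exponent depends in an opaque way on both the drift constant \emph{and} the constants in a minorization condition on the petite set; the paper says this explicitly (``there is no simple way to get the exact rate $C(d)$ from the Lyapunov estimate~\eqref{GDCbis}'') and cites the quantitative literature~\cite{RT,RR,Rminor,BC16,EGZ18} precisely because none of those approaches applies here. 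Your proposal never addresses the minorization side, so it cannot close this gap.

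As a proof of Theorem~\ref{keylem} your outline takes a genuinely different route from the paper's. The paper uses a single \emph{radial} Lyapunov function $V(x)=\exp(\lambda\phi(\|x\|))$ and reduces everything to bounding the scalar quantity $\beta_d(x)=\sum_i(\mu_i+U'(x_i)-U'(x_{i-1}))\,x_i/\|x\|$, handling the dangerous orthant $x<0$ via telescoping and an argmin-removal induction (Lemma~\ref{claim}). You instead propose a \emph{separable} function $V(x)=\exp\bigl(\sum_i\psi_i(x_i)\bigr)$ with strictly ordered left-tail slopes $\gamma_1>\cdots>\gamma_d$, so that after Abel summation each $U'(x_j)$ carries the coefficient $\psi_j'(x_j)-\psi_{j+1}'(x_{j+1})\to -(\gamma_j-\gamma_{j+1})<0$. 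This is a reasonable idea and exploits the lower-bidiagonal structure of $R$ cleanly, but two points are loose: first, your decomposition $\beta_d=\beta_{d-1}+\ldots$ assumes a block structure of $\Gamma$ that is not in the hypotheses ($\Gamma$ is only assumed positive definite), so the cross terms $\sum_{i<d}\Gamma_{id}\,\partial_i\phi_{d-1}\cdot\psi_d'$ are present and must be absorbed; second, the sign argument for the telescoped coefficient requires $\psi_{j+1}'(x_{j+1})\ge -\gamma_{j+1}$ uniformly, which is fine, but you also need the resulting negative term to dominate the bounded positive diffusion and cross terms \emph{uniformly over all mixed regimes}, and you have only asserted, not verified, this bookkeeping. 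In any case, even a complete proof along these lines would recover Theorem~\ref{keylem}, not Conjecture~\ref{cj6}.
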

Conjecture \ref{cj6} would imply that the relaxation time to global equilibrium is of order $d$ for a class of GRBMs.

\bigskip

{\bf Outline of the paper}: The rest of the paper is organized as follows.
In Section \ref{s2}, we provide background on stochastic stability theory, and prove Corollary \ref{mainbis}.
In Section \ref{s4}, we give a proof of Theorem \ref{keylem}.
In Section \ref{s5}, we apply these results to the O'Connell-Yor process. There several open questions are raised.
%-----------------------------------------------------------------------------------------------------------------
\section{Stochastic stability $\&$ exponential ergodicity for GRBM}
\label{s2}
%---------------------------------------------------------
\subsection{Stochastic stability theory}
In this subsection we present the main tool to prove Corollary \ref{mainbis}: 
stochastic stability theory for continuous-time Markov processes developed by Meyn and Tweedie \cite{MT2,MT3}. 
See \cite{MTbook, MT2} for background.

\quad Meyn and Tweedie \cite[Theorem $4.2$]{MT3} provided criteria for a Markov process to be positive Harris recurrent in terms of its infinitesimal generator. 
 \begin{theorem}
 \label{MTADR}
 Let $(Z_t; \, t \geq 0)$ be a $\mathbb{R}^d$-valued Markov process with generator $\mathcal{L}$.
 \begin{enumerate}
 \item  \label{MTADT1}
 If there exist $k>0$, $b<\infty$, a petite set $C \subset \mathbb{R}^d$, and a norm-like function $V: \mathbb{R}^d \rightarrow [1,\infty)$ such that
 \begin{equation*}
 \mathcal{L}V \leq -k+b1_{C},
 \end{equation*}
 then $(Z_t; \, t \geq 0)$ is positive Harris recurrent.
  \item  \label{MTADT2}
If $(Z_t; \,t \geq 0)$ is positive Harris recurrent, then it has a unique stationary distribution.
 \end{enumerate}
 \end{theorem}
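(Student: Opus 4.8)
The plan is to recognise Theorem~\ref{MTADR} as the continuous-time Foster--Lyapunov drift criterion of Meyn and Tweedie \cite{MT3}, and to organise the proof of part~(1) into an elementary stochastic-calculus reduction followed by an appeal to the abstract recurrence theory; part~(2) is then the classical existence--uniqueness theorem for invariant probability measures of positive Harris recurrent processes. Throughout, the hypothesis that $C$ is a \emph{petite} set situates us within the $\psi$-irreducible framework of \cite{MT2,MT3}, on which the notions of petiteness and of Harris recurrence are defined.

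For part~(1), I would first verify non-explosion. Since $\mathcal{L}V \leq -k + b1_C \leq b$, Dynkin's formula applied with the exit times $\tau_n := \inf\{t \geq 0 : \|Z_t\| \geq n\}$ gives $\mathbb{E}_x[V(Z_{t \wedge \tau_n})] \leq V(x) + bt$. Writing $m_n := \inf_{\|y\| \geq n} V(y)$, one has $m_n\,\mathbb{P}_x(\tau_n \leq t) \leq V(x)+bt$, and since $V$ is norm-like $m_n \to \infty$, so $\mathbb{P}_x(\tau_n \leq t) \to 0$ and the process is non-explosive. Next, writing $\tau_C := \inf\{t \geq 0 : Z_t \in C\}$, the drift inequality reads $\mathcal{L}V \leq -k$ off $C$, so for $x \notin C$ Dynkin's formula together with non-explosion and monotone convergence yields $0 \leq \mathbb{E}_x[V(Z_{t \wedge \tau_C})] \leq V(x) - k\,\mathbb{E}_x[t \wedge \tau_C]$; letting $t \to \infty$ produces the key estimate $\mathbb{E}_x[\tau_C] \leq V(x)/k < \infty$. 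Thus $C$ is reached in finite expected time from every starting point.

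To upgrade this to positive Harris recurrence I would appeal to the abstract theory of \cite{MT3}. The drift inequality $\mathcal{L}V \leq -k + b1_C$ with $C$ petite is precisely the Foster--Lyapunov condition whose conclusion, for a $\psi$-irreducible non-explosive process, is positive Harris recurrence: the norm-like $V$ together with the petite set $C$ and the finiteness of $\mathbb{E}_x[\tau_C]$ yield Harris recurrence, while the strictly negative \emph{constant} $-k$ in the drift forces the invariant measure to be finite (equivalently $\pi(1) < \infty$), i.e.\ positivity. Finally, part~(2) is standard: a positive Harris recurrent process admits an invariant measure that is unique up to a scalar multiple, and positivity renders it finite, so its normalisation is the unique stationary probability distribution $\pi$; this is the regeneration argument of \cite{MTbook}.

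The main obstacle is the passage in the third step from the hitting-time bound to Harris recurrence. Unlike the Dynkin reduction, which is routine once non-explosion is secured, this step is not a short self-contained computation: it rests on the full apparatus of $\psi$-irreducibility, the equivalence between petite and small sets, and the skeleton/resolvent-chain embedding that transfers discrete-time recurrence criteria to the continuous-time process, all developed in \cite{MT2,MT3,MTbook}. Accordingly I would cite that machinery rather than reprove it, and concentrate the original work on the drift estimate itself.
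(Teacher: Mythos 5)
Your proposal is correct, but note that the paper does not prove this statement at all: it is quoted verbatim from Meyn and Tweedie \cite[Theorem 4.2]{MT3}, so the only proof to compare against is the original one. Your sketch---non-explosion via Dynkin's formula, the hitting-time bound $\mathbb{E}_x[\tau_C] \leq V(x)/k$ from the drift off $C$, the upgrade to positive Harris recurrence via the petite-set/skeleton machinery of \cite{MT2,MT3}, and the standard uniqueness of the invariant measure for part (2)---is exactly the route taken in the cited source (their condition (CD1)), with the deep steps deferred to the same references, so it is essentially the same approach.
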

 
\quad Meyn and Tweedie \cite[Theorem 6.1]{MT3} also gave a criterion for a Markov process to be uniformly exponentially ergodic. 
\begin{theorem}
\label{dadada}
Let $(Z_t; \, t \geq 0)$ be a $\mathbb{R}^d$-valued Markov process with generator $\mathcal{L}$. 
If all compact sets are petite, and there exist $k>0$, $b<\infty$, a petite set $C \subset \mathbb{R}^d$, and a norm-like function $V: \mathbb{R}^d \rightarrow [1,\infty)$ such that
 \begin{equation*}
 \mathcal{L}V \leq -k V+b1_{C},
 \end{equation*}
 then $(Z_t; \, t \ge 0)$ is uniform exponential ergodic.
\end{theorem}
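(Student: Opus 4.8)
The plan is to prove this general Foster--Lyapunov criterion through three ingredients: exponential moments of return times extracted from the drift inequality, a regeneration structure supplied by the petite set, and a coupling argument that converts geometric return-time tails into geometric convergence in the $V$-weighted total variation norm. I first note that, since $V \geq 1$, the geometric drift $\mathcal{L}V \leq -kV + b1_C$ implies the weaker drift $\mathcal{L}V \leq -k + b1_C$, so Theorem \ref{MTADR} already guarantees positive Harris recurrence and a unique stationary distribution $\pi$; the task is to upgrade this to a geometric rate.

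First I would exploit the drift inequality away from $C$. On the complement of $C$ it reads $\mathcal{L}V \leq -kV$, so by Dynkin's formula the process $t \mapsto e^{k(t \wedge \tau_C)}V(Z_{t \wedge \tau_C})$ is a nonnegative supermartingale, where $\tau_C := \inf\{t > 0 : Z_t \in C\}$. Optional stopping then yields, for $x \notin C$,
$$\mathbb{E}_x\left[e^{k\tau_C}\right] \leq V(x).$$
Integrating the full drift condition via Gr\"onwall also gives the transient bound $\mathbb{E}_x[V(Z_t)] \leq e^{-kt}V(x) + b/k$, which controls the growth of $V$ along trajectories and shows that excursions outside $C$ have geometrically light tails, with a prefactor governed by $V(x)$.

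Next I would upgrade the petite property of $C$ into a usable minorization. Since $C$ is petite, there are a sampling distribution $a$ on $[0,\infty)$, a constant $\delta > 0$, and a probability measure $\nu$ with $\int_0^\infty P^t(x,\cdot)\,a(dt) \geq \delta\,\nu(\cdot)$ for every $x \in C$. Applying the Nummelin splitting construction to an associated sampled (resolvent or skeleton) chain produces a genuine regeneration structure: whenever the chain visits $C$ it regenerates, with probability $\delta$, from the common law $\nu$. Combining this with the exponential return-time estimate of the previous step shows that the successive inter-regeneration times have geometrically decaying tails, the tail of the first such time being controlled by $V(x)$.

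Finally I would run a coupling argument. Couple the process started at $x$ with a stationary copy drawn from $\pi$; the regeneration structure lets the two copies coalesce at a common regeneration, and the geometric tails of the regeneration times force the coupling time $T$ to obey $\mathbb{P}_x(T > t) \leq C\,W(x)\,e^{-\delta' t}$ for some $W \asymp V$ and $\delta' > 0$. The coupling inequality $\|P^t(x,\cdot) - \pi\|_{TV} \leq 2\,\mathbb{P}_x(T > t)$ then delivers the asserted uniform exponential ergodicity in the sense of the Definition above. The main obstacle is the third step: bridging the purely infinitesimal drift condition to a discrete regeneration structure, and verifying that the coupling time inherits geometric tails \emph{uniformly in the starting point}, with the correct $V(x)$-dependence in the prefactor. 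This is precisely where the hypothesis that all compact sets are petite enters, ruling out escape of mass and ensuring the return mechanism is controlled over the entire state space rather than only on $C$.
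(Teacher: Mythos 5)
The paper does not actually prove this statement: it is quoted verbatim from Meyn and Tweedie \cite[Theorem 6.1]{MT3} and used as a black box, so the relevant comparison is with the published Meyn--Tweedie/Down--Meyn--Tweedie proof.

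Your outline assembles the right circle of ideas, and your first two steps are sound: with $V$ norm-like the drift condition rules out explosion, Dynkin's formula applies, the supermartingale argument gives $\mathbb{E}_x[e^{k\tau_C}]\le V(x)$ off $C$, and Gr\"onwall gives $\mathbb{E}_x[V(Z_t)]\le e^{-kt}V(x)+b/k$. The genuine gap is exactly at the step you flag as ``the main obstacle,'' and flagging it does not cross it. Petiteness of $C$ only supplies $\int_0^\infty P^t(x,\cdot)\,a(dt)\ge\delta\,\nu(\cdot)$ for \emph{some} sampling distribution $a$, which may have arbitrarily heavy tails; in that case ``regenerate with probability $\delta$ at each visit to $C$'' does not yield inter-regeneration times with geometric tails, and your coupling bound fails. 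Moreover, your coupling needs the two copies to regenerate \emph{simultaneously}, i.e.\ an aperiodicity input: ``all compact sets petite'' does not by itself make every skeleton chain irreducible. The published proof sidesteps continuous-time regeneration entirely: from $P^tV\le e^{-kt}V+b/k$ one extracts a discrete-time geometric drift $P^hV\le\lambda V+b'1_C$ for a skeleton, or the analogous bound for the resolvent kernel $R_\beta=\int_0^\infty\beta e^{-\beta t}P^t\,dt$ --- whose exponential sampling distribution has the needed light tails and is spread out, so aperiodicity comes for free --- then invokes the discrete-time $V$-geometric ergodic theorem of \cite{MTbook} (Chapters 15--16) for that chain, and finally transfers the rate back to continuous time by interpolation using the transient bound, as in \cite{DMT}. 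So your proposal has the correct skeleton of ideas but leaves unproved precisely the reduction in which the content of the theorem lives; to repair it, replace the ad hoc regeneration-at-$C$ construction by the resolvent (or skeleton) reduction and cite the discrete-time theory, or else first upgrade the petite-set minorization to one with an exponentially tailed sampling distribution before running your splitting and coupling argument.
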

 
\quad It is well known that under the {\em geometric drift condition}, a Markov chain converges to its equilibrium with rate $\rho^n$ for some $\rho<1$, see \cite[Chapter $15$]{MTbook}. 
Down, Meyn and Tweedie \cite{DMT} extended this result to the continuous setting. 
Under the exponential drift condition, a Markov process converges exponentially to its stationary distribution with some exponent $\delta>0$. But the explicit value or bounds of $\rho<1$ and $\delta>0$ were unknown.
Efforts have been made to derive bounds of $\rho$ and $\delta$ under extra assumptions that
\begin{itemize}[itemsep = 4 pt]
\item
the Markov chain/process is stochastically ordered, and the state space has a minimal element, see \cite{BC16, LMT,LT}.
\item
the Markov chain/process satisfies a minorisation condition: there exists a Borel set $C \subset \mathbb{R}^d$, $t^{*}>0$, $\varepsilon>0$, and a probability distribution $\nu$ on $\mathbb{R}^d$ such that for each Borel set $A \in \mathbb{R}^d$,
$$
P^{t^{*}}(x,A)  \geq \varepsilon \nu(A) \quad \mbox{for all}~x \in C.
$$
But it is difficult to provide good estimates of $(t^{*}, \varepsilon)$.
See \cite{RT,RR,Rminor}.
\end{itemize}
%-----------------------------------------------------
\subsection{Proof of Corollary \ref{mainbis}}
In this subsection we explain how to use the exponential drift condition \eqref{GDCbis} to derive the uniform exponential ergodicity for GRBM$(\Gamma,\mu,R,U)$. 
We begin with two simple lemmas.
\begin{lemma}
\label{prop1}
Under Assumption \ref{hypo}, the SDE \eqref{SDE} has a strong solution which is pathwise unique.
\end{lemma}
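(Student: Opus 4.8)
The plan is to verify the three sufficient conditions stated earlier in the excerpt for existence and pathwise uniqueness of a strong solution, namely (i) strict positive definiteness of $\Gamma$, (ii) local Lipschitz continuity of $U'$, and (iii) the linear-growth bound \eqref{addcond1}. Under Assumption \ref{hypo}, condition (i) is immediate since it is listed verbatim. The goal is therefore to deduce (ii) and (iii) from the hypotheses on $U'$ and the special tridiagonal structure \eqref{trid} of $R$, and then invoke the classical existence/uniqueness theorem for SDEs cited from \cite[Section V]{RW}.

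\emph{Local Lipschitz continuity of $U'$.} First I would observe that Assumption \ref{hypo}(3)(a) gives that $U'$ is continuous on $\mathbb{R}$, and Assumption \ref{hypo}(3)(b) gives the two-sided limiting behavior. However, continuity alone does not yield a local Lipschitz bound, so strictly speaking the lemma as stated seems to require slightly more regularity than the bare Assumption \ref{hypo}. The cleanest route is to note that the potential $U$ is assumed \emph{smooth} in the very definition of GRBM (the bullet list following \eqref{SDE} stipulates a smooth potential $U$), so $U'$ is itself continuously differentiable; a continuously differentiable function is automatically locally Lipschitz, with $K_R = \sup_{|y| \leq R} |U''(y)| < \infty$ by continuity of $U''$ on the compact interval $[-R,R]$. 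This gives condition (ii) at once.

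\emph{The linear-growth bound.} This is where the structure of $R$ does the work. Under \eqref{trid}, the drift vector has coordinates $\mu_i + U'(x_i) - U'(x_{i-1})$ with the convention $x_0 = \infty$, so $U'(x_0) = 0$ by Assumption \ref{hypo}(3)(b). I would write
\begin{equation*}
x \cdot \Big(\mu + \sum_{i=1}^d U'(x_i) r_i\Big) = \sum_{i=1}^d x_i \mu_i + \sum_{i=1}^d x_i\big(U'(x_i) - U'(x_{i-1})\big),
\end{equation*}
and bound the first sum by $\|x\|\,\|\mu\| \leq K(1 + \|x\|^2)$ trivially. For the second sum, the key point is that $y\,U'(y)$ is bounded above: since $U' \geq 0$ and $U'(y) \to 0$ as $y \to +\infty$ while $U'(y) \to \infty$ as $y \to -\infty$, the product $y\, U'(y)$ stays bounded above on $\mathbb{R}$ (it is negative for large positive $y$ and, because $U'$ is decreasing on $\mathbb{R}_-$, the contribution $x_i U'(x_i)$ for $x_i < 0$ is negative). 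One then controls the cross terms $-x_i U'(x_{i-1})$ using $|x_i U'(x_{i-1})| \leq \tfrac12(x_i^2 + U'(x_{i-1})^2)$ together with the fact that $U'(x_{i-1})$ is large only when $x_{i-1}$ is very negative, in which case the telescoping structure and monotonicity keep the total under a quadratic envelope. The upshot is a bound of the form $K(1 + \|x\|^2)$ for a suitable finite $K$.

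\emph{The main obstacle.} The delicate step is condition \eqref{addcond1}, specifically handling the region where several coordinates are large and negative, since there $U'(x_{i-1}) \to \infty$ and the off-diagonal terms $-x_i U'(x_{i-1})$ are not individually controlled by $\|x\|^2$. The resolution relies precisely on the sign structure dictated by the tridiagonal $R$: the problematic positive contributions are offset by the diagonal terms $x_i U'(x_i)$, which are large and \emph{negative} in exactly the same regime. I expect that making this cancellation quantitative — confirming that the net effect respects the quadratic growth bound uniformly in $x$ — is the one nontrivial estimate, after which the cited SDE theory from \cite[Section V]{RW} delivers the strong solution and its pathwise uniqueness immediately.
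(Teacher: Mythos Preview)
Your overall strategy matches the paper's: verify the three sufficient conditions, with (i) immediate and (ii) following from smoothness of $U$. The divergence is in how you handle the Khasminskii condition \eqref{addcond1}, and there your argument has a genuine gap.

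The AM--GM step $|x_i U'(x_{i-1})| \le \tfrac12\bigl(x_i^2 + U'(x_{i-1})^2\bigr)$ is useless here: $U'(x_{i-1})^2$ need not be controlled by any polynomial in $\|x\|$ (for the O'Connell--Yor potential it equals $e^{-2x_{i-1}}$). Your claim that $y\,U'(y)$ is ``negative for large positive $y$'' is also false, since $U'\ge 0$; and without further decay hypotheses on $U'$ there is no reason $y\,U'(y)$ should even be bounded above on $(0,\infty)$. You correctly identify that the resolution must come from cancellation between the diagonal terms $x_iU'(x_i)$ and the off-diagonal $-x_iU'(x_{i-1})$ via the telescoping structure and monotonicity of $U'$, but you do not actually carry this out, and the tools you propose do not do the job.

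The paper avoids a direct attack entirely: it simply invokes the estimate
\[
\sum_{i=1}^d x_i\bigl(\mu_i + U'(x_i) - U'(x_{i-1})\bigr) \le \Bigl(-\min_{1\le i\le d}|\mu_i| + \epsilon\Bigr)\|x\|
\]
for $\|x\|$ large, which is exactly $\|x\|\,\beta_d(x) \le \mu_{\min}^{(d)}\|x\|$ from Lemma~\ref{claim} in Section~\ref{s4}. That lemma is proved by an induction on $d$ with a detailed case analysis exploiting the telescoping identities \eqref{201}--\eqref{18} and the monotonicity of $U'$ on $\mathbb{R}_-$; this is the ``one nontrivial estimate'' you anticipate, and it yields a linear (not merely quadratic) bound on $x\cdot b(x)$, from which \eqref{addcond1} is immediate. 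In short, the substantive work sits in Section~\ref{s4}, and the proof of Lemma~\ref{prop1} is a forward reference to it.
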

\begin{proof}
As we will see in Section \ref{s4}, a key step to prove Theorem \ref{keylem} is the following estimate
\begin{equation}
\label{Khas}
\sum_{i=1}^dx_i(\mu_i+U'(x_i)-U'(x_{i-1})) \leq \left(-\min_{1 \leq i \leq d}|\mu_i|+\epsilon\right) ||x||,
\end{equation}
for arbitrary small $\epsilon>0$ and $||x||$ large enough. Plainly, the Khasminskii non-explosion condition \eqref{addcond1} is satisfied. Combined with the local Lipschitz property of $U'$, we conclude that the SDE \eqref{SDE} has a strong solution which is pathwise unique.
\end{proof}

\begin{lemma}
\label{prop2}
Under Assumption \ref{hypo}, GRBM$(\Gamma,\mu,R,U)$ defined as the strong solution to the SDE \eqref{SDE} is positive Harris recurrent and has a unique stationary distribution.
\end{lemma}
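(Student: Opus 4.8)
The plan is to deduce positive Harris recurrence from the Foster--Lyapunov criterion of Theorem \ref{MTADR}(\ref{MTADT1}), and then read off the unique stationary distribution from Theorem \ref{MTADR}(\ref{MTADT2}). The drift estimate required by Theorem \ref{MTADR}(\ref{MTADT1}) comes essentially for free: Theorem \ref{keylem} produces a norm-like $V \colon \mathbb{R}^d \to [1,\infty)$ and a constant $b<\infty$ with $\mathcal{L}V \leq -kV + b\,1_{\mathcal{B}_r}$, where $k:=\frac{1}{2d||\Gamma||}\big(\min_{1\le i\le d}|\mu_i|^2-\epsilon\big)$. Since $\mu<0$ forces $\min_{i}|\mu_i|>0$, the constant $k$ is strictly positive once $\epsilon$ is chosen small enough. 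Using $V\ge 1$ we bound $-kV\le -k$, so the exponential drift condition \eqref{GDCbis} immediately yields the ordinary drift condition $\mathcal{L}V \leq -k + b\,1_{\mathcal{B}_r}$ with the \emph{compact} set $C:=\mathcal{B}_r$.

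What remains, and what I expect to be the only real obstacle, is to verify that $C=\mathcal{B}_r$ is \emph{petite}, as Theorem \ref{MTADR} demands. The route I would take is to establish $\psi$-irreducibility together with a regularity (T-process) property, from which petiteness of all compact sets follows from the theory of Meyn and Tweedie. Here the strict positive definiteness of $\Gamma$ in Assumption \ref{hypo}(1) is decisive: it makes the generator \eqref{IGexp} uniformly elliptic, so that standard parabolic theory endows the transition kernel $P^t(x,\cdot)$ with a jointly continuous and everywhere strictly positive density with respect to Lebesgue measure. Strict positivity of the density gives Lebesgue-irreducibility (hence $\psi$-irreducibility with $\psi=\Leb$), while joint continuity together with the Feller property from Lemma \ref{prop1} gives the strong Feller/T-process property; for such a process every compact set is petite.

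Assembling the pieces: by Lemma \ref{prop1} the GRBM is a well-defined, Feller continuous, strong Markov process, so the stochastic-stability framework applies. The ellipticity argument of the previous paragraph shows that $\mathcal{B}_r$ is petite, and the reduction of \eqref{GDCbis} supplies the inequality $\mathcal{L}V\le -k+b\,1_{\mathcal{B}_r}$ with $k>0$. Theorem \ref{MTADR}(\ref{MTADT1}) then gives positive Harris recurrence, and Theorem \ref{MTADR}(\ref{MTADT2}) gives the existence and uniqueness of the stationary distribution $\pi$. The delicate input is entirely the petiteness/irreducibility step; the Lyapunov bound, being a direct consequence of Theorem \ref{keylem}, requires no work beyond the elementary manipulation $-kV\le -k$.
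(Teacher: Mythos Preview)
Your proposal is correct and follows essentially the same route as the paper: reduce \eqref{GDCbis} to $\mathcal{L}V\le -k+b\,1_{\mathcal{B}_r}$ via $V\ge 1$, argue that compact sets are petite because the process is a Lebesgue-irreducible $T$-process, and then invoke Theorem~\ref{MTADR}. The only cosmetic difference is that the paper dispatches the irreducibility/$T$-process step by citing \cite[Theorem~2.3]{ST} and \cite[Theorem~4.1(i)]{MT2}, whereas you sketch the underlying ellipticity argument directly.
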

\begin{proof} According to the exponential drift condition \eqref{GDCbis}, there exist $k>0$, $b<\infty$, $R>0$, and a norm-like function $V:\mathbb{R}^d \rightarrow [1,\infty)$ such that
$$\mathcal{L}V \leq -k+b1_{\mathcal{B}_R},$$
where $\mathcal{L}$ is defined as in \eqref{IGexp}. 
According to \cite[Theorem $2.3$]{ST}, a GRBM is Lebesgue-irreducible $T-$process.
It follows from \cite[Theorem $4.1$(i)]{MT2} that for a Lebesgue-irreducible $T$-process, every compact set is petite. In particular, $\mathcal{B}_R$ as a compact set is petite. 
By Theorem \ref{MTADR}, GRBM$(R,\mu,\Gamma,U)$ is positive Harris recurrent and has a unique stationary distribution.
 \end{proof}

\quad The existence and uniqueness of the stationary distribution of GRBM can also be derived from the exponential drift condition \eqref{GDCbis} in a purely analytical way. By \cite[Corollary $1.3$]{BR}, there exists a stationary distribution which is absolute continuous relative to Lebesgue measure with density $p \in L^{d/(d-1)}(\mathbb{R}^d)$. The uniqueness follows from  \cite[Example $5.1$]{BRS}. 

\begin{proof}[Proof of Corollary \ref{mainbis}]
Lemma \ref{prop1} and \ref{prop2} guarantees that GRBM$(R, \mu, \Gamma, U)$ is well-defined, and has a unique stationary distribution.
It suffices to apply Theorem \ref{dadada} with Theorem \ref{keylem} to conclude.
\end{proof}
%-----------------------------------------------------------------------------------------------------------------
\section{Exponential drift condition for GRBM}
\label{s4}
\quad In this section we prove Theorem \ref{keylem} by induction on $d$ -- the dimension of GRBM.
To proceed further, we need the following notations.
Let $\epsilon>0$ chosen to be small enough and $L>0$ chosen to be large enough. Define
\begin{itemize}[itemsep = 4 pt]
\item 
$\mu^{(d)}_{\min}(\epsilon):=-\underset{1 \leq i \leq d}{\min}|\mu_i|+\epsilon<0$,
\item
$r_{+}(d,\epsilon)>0$ such that for all $1 \le i \le n$, 
$\mu_i+U'(x) \leq \mu^{(d)}_{\min}(\epsilon)$ for $x \geq r_{+}(d,\epsilon)$,
\item
$r_{-}(d,\epsilon,L)>0$ such that for all $1 \le i \le n$, $\mu_i+U'(x) \geq |\mu^{(d)}_{\min}(\epsilon)|+L$ for $x \leq -r_{-}(d,\epsilon,L)$.
\end{itemize}
To avoid heavy notations, we abandon the dependance on $(d,\epsilon,L)$, and write $\mu_{\min}^{(d)}$, $r_{+}$, $r_{-}$ instead of $\mu^{(d)}_{\min}(\epsilon)$, $r_{+}(d,\epsilon)$, $r_{-}(d,\epsilon,L)$.

\begin{proof}[Proof of Theorem \ref{keylem}]
Define $V: \mathbb{R}^d \rightarrow [1, \infty)$ by
\begin{equation}
\label{V}
V(x):=\exp \left(\lambda \phi (||x||)\right) \quad \mbox{for}~\lambda>0,
\end{equation}
where $\phi: \mathbb{R}_{+} \rightarrow \mathbb{R}_{+}$ is an increasing $\mathcal{C}^{2}$ function such that $\phi(s)=0$ for $s \leq \frac{1}{2}$, and $\phi(s)=s$ for $s \geq 1$. Let $\psi(x):=\phi(||x||)$. We get
$$DV(x)= \lambda D\psi(x) V(x),$$
$$D^{2}V(x)= \left(\lambda D^{2}\psi(x) + \lambda^2 D\psi(x)D\psi(x)^{T} \right)V(x).$$
Note that for $||x|| \geq 1$, $||D\psi(x)||=1$ and
$$||D^2\psi(x)||=\left| \left|\frac{I_d}{||x||}-\frac{xx^T}{||x||^3}\right|\right| \leq \frac{2}{||x||} \rightarrow 0 \quad \mbox{as}~||x|| \rightarrow \infty.                                             
$$
So there exists $r_{\epsilon} \geq 1$ such that for $x \notin \mathcal{B}_{r_{\epsilon}}$, we have $||D^2\psi(x)|| \leq \epsilon$. 
In this case, we have
\begin{equation}
\label{1}
\left|\sum_{i,j=1}^{d} \Gamma_{ij} \frac{\partial^2 V}{\partial x_i \partial x_j}(x)\right| \leq d ||\Gamma|| ||D^2V(x)|| \leq d ||\Gamma|| (\lambda \epsilon+ \lambda^2) V(x),
\end{equation}
and
\begin{equation*}
\sum_{i=1}^d \left(\mu_i+U'(x_i)-U'(x_{i-1}) \right)\frac{\partial \psi}{\partial x_i}(x) =\sum_{i=1}^d \left(\mu_i+ U'(x_i)-U'(x_{i-1}) \right)\frac{x_i}{||x||}.                                                  
\end{equation*}
Consequently, for $||x||$ large enough,
\begin{equation}
\label{1000}
\mathcal{L}V(x) \leq \left( \frac{1}{2}d||\Gamma||\lambda^2+(\beta_d(x) + \mathcal{O}(\epsilon))\lambda \right)V(x),
\end{equation}
where 
\begin{equation}
\label{betad}
\beta_d(x):=(\mu_1+U'(x_1))\frac{x_1}{||x||}+\sum_{k=2}^d (\mu_k+U'(x_k)-U'(x_{k-1}))\frac{x_k}{||x||}.
\end{equation}

The case $d=1$ is straightforward.
We consider $d = 2$ as the base case.

\bigskip\noindent
\textbf{Step 1 ($d=2$):} By \eqref{1000},
\begin{equation}
\label{100}
\mathcal{L}V(x) \leq \left[ ||\Gamma|| \lambda^2 +(\beta_2(x)+ \mathcal{O}(\epsilon) )\lambda \right]V(x),
\end{equation}
where 
\begin{equation*}
\beta_2(x):=(\mu_1+U'(x_1))\frac{x_1}{||x||}+(\mu_2+U'(x_2)-U'(x_1))\frac{x_2}{||x||}.
\end{equation*}
\begin{lemma}
\label{2d}
$\beta_2(x) \leq \mu_{\min}^{(2)}+ \mathcal{O}(\epsilon)$ for $||x|| \geq \max \left(r_{+}/\epsilon, r_{-}/\epsilon, r'_{\epsilon}\right)$, where $r'_{\epsilon}$ is given in the proof.
\end{lemma}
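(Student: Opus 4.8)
The starting point is the telescopic identity
$\beta_2(x)=\mu_1 n_1+\mu_2 n_2+U'(x_1)(n_1-n_2)+U'(x_2)n_2$, where $n_i:=x_i/\|x\|$ and $n_1^2+n_2^2=1$, obtained from \eqref{betad} by summation by parts (using the convention $U'(x_0)=U'(\infty)=0$). I would first record what the three radii buy: if $x_i\ge r_+$ then $\mu_i+U'(x_i)\le\mu_{\min}^{(2)}$; if $x_i\le -r_-$ then $\mu_i+U'(x_i)\ge|\mu_{\min}^{(2)}|+L$; and if $|x_i|<\max(r_+,r_-)$ (call such a coordinate \emph{bounded}) then $\|x\|\ge\max(r_+,r_-)/\epsilon$ forces $|n_i|\le\epsilon$, while $U'(x_i)\le M_\epsilon:=\sup_{[-r_-,r_+]}U'$ stays bounded. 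The last fact means every term carrying a bounded coordinate is $\mathcal O(\epsilon)$, so up to an $\mathcal O(\epsilon)$ error I may assume that each coordinate which is not bounded is of type P ($x_i\ge r_+$) or type N ($x_i\le -r_-$).

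The main split is on the sign of $x_2$, which governs the coupling term $-U'(x_1)n_2$. When $x_2\ge0$ this term is $\le0$, so $\beta_2\le(\mu_1+U'(x_1))n_1+(\mu_2+U'(x_2))n_2$; a type-P coordinate then contributes $\le\mu_{\min}^{(2)}n_i$, a type-N coordinate (necessarily $x_1$) contributes $\le-(|\mu_{\min}^{(2)}|+L)|n_1|\le0$, and bounded coordinates contribute $\mathcal O(\epsilon)$. Using $n_1+n_2\ge1$ when both coordinates are nonnegative, and $n_j\ge\sqrt{1-\epsilon^2}=1-\mathcal O(\epsilon)$ when the partner coordinate is bounded, the dominant type-P (or hugely negative type-N) coordinate drives this to $\le\mu_{\min}^{(2)}+\mathcal O(\epsilon)$. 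This direction is routine.

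The case $x_2<0$ is where I expect the real work, because then $-U'(x_1)n_2=U'(x_1)|n_2|\ge0$ is positive and $U'(x_1)$ may be enormous. I would keep the telescopic form and exploit the monotonicity of $U'$ on $\mathbb R_-$. If $x_2\le x_1$, then $U'(x_2)\ge U'(x_1)$ and, after writing $n_1-n_2=|n_2|-|n_1|\ge0$, the two $U'$-terms combine to $\le -U'(x_2)|n_1|\le0$, so the genuinely large negative mass sits in $U'(x_2)n_2$; if instead $x_1\le x_2$, the mass sits in $U'(x_1)n_1$ inside $(\mu_1+U'(x_1))n_1$. The delicate point is that the most negative coordinate can have a small direction cosine (e.g. $x_1\approx -r_-$ with $\|x\|$ huge), so one must check that the restoring contribution still beats the $\mathcal O(1)$ quantity $\mu_1 n_1+\mu_2 n_2$ together with the coupling term. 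This is exactly where the extra radius $r'_\epsilon$ enters: since $U'(y)\to\infty$ as $y\to-\infty$ (and, quantitatively, by the growth condition $U'(ay)/U'(by)\to\infty$ of Assumption \ref{hypo}), the configurations in which the coefficient $\mu_2+U'(x_2)-U'(x_1)$ fails to be large while the most negative coordinate has a vanishing direction cosine are confined to a bounded region of $\mathbb R^2$; choosing $r'_\epsilon$ past that region forces the dominant coordinate to have $|n_i|$ bounded below, after which the drift $\ge|\mu_{\min}^{(2)}|+L$ (with $L$ large) yields $\beta_2\le\mu_{\min}^{(2)}+\mathcal O(\epsilon)$, indeed $\beta_2\to-\infty$ in the doubly-negative regime.

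I would finish by taking $r'_\epsilon$ to be the maximum of the finitely many radii produced in the cases above and collecting all $\mathcal O(\epsilon)$ contributions (from bounded coordinates and from the slack $\sqrt{1-\epsilon^2}=1-\mathcal O(\epsilon)$). The main obstacle, and the only place the structural hypotheses on $U$ are essential, is the doubly-negative case $x_1,x_2<0$; note that the special bidiagonal form of $R$ is what produces the single clean coupling term $-U'(x_1)n_2$ that the monotonicity of $U'$ can absorb.
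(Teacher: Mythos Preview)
Your approach is essentially the paper's: both use the same telescopic identity, the same trichotomy on each coordinate (large positive, large negative, or of small direction cosine), the monotonicity $(x_1-x_2)(U'(x_1)-U'(x_2))\le 0$ in the doubly-negative quadrant, and the ratio-growth hypothesis on $U'$ to handle the boundary sub-cases there. The paper organizes this as a four-way split on $(\sgn x_1,\sgn x_2)$ with three direction-cosine sub-cases in each, which makes two places you leave sketchy fully explicit: the mixed case $x_1\ge 0>x_2$ (your write-up jumps from ``$x_2<0$'' straight into the doubly-negative monotonicity argument, but this case needs instead that $U'$ is bounded on $[0,\infty)$ so that $\mu_2+U'(x_2)-U'(x_1)\ge|\mu_{\min}^{(2)}|$ for $L$ large), and the doubly-negative boundary sub-cases where one $|n_i|\le\epsilon$ (your bound $\le\mathcal O(1)-U'(x_2)|n_1|$ is exactly the paper's, but it is useless when $|n_1|\le\epsilon$; the paper then reverts to the non-telescoped form and invokes $U'(ay)/U'(by)\to\infty$ directly, which is what your ``confined to a bounded region'' remark is gesturing at).
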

\begin{proof} 
There are four cases according to the signs of $(x_1,x_2)$.

\textbf{Case 1:} $x_1 \geq 0$ and $x_2 \geq 0$. 
\begin{enumerate}
\item
If $\frac{x_1}{||x||} \geq \epsilon$ and $\frac{x_2}{||x||} \geq \epsilon$, then $x_1 \geq r_{+}$ and $x_2 \geq r_{+}$. We have 
$$\mu_1+U'(x_1) \leq \mu_{\min}^{(2)} \quad \mbox{and} \quad \mu_2+U'(x_2)-U'(x_1) \leq \mu_2+U'(x_2) \leq \mu_{\min}^{(2)}.$$
Therefore, 
$$\beta_2(x) \leq \mu_{\min}^{(2)}\left(\frac{x_1}{||x||}+\frac{x_2}{||x||}\right) \leq \mu_{\min}^{(2)}.$$
\item
If $\frac{x_1}{||x||} \leq \epsilon$, then $\frac{x_2}{||x||} \geq \sqrt{1-\epsilon^2}$. 
We get $x_2 \geq \sqrt{1-\epsilon^2} r_{+} / \epsilon \geq r_{+}$, so
$$\mu_2+U'(x_2)-U'(x_1) \leq \mu_{\min}^{(2)}.$$
Moreover, $\mu_1+U'(x_1) =\mathcal{O}(1)$ since $x_1 \geq 0$. 
Thus,
$$\beta_2(x) \leq \mathcal{O}(\epsilon)+\mu_{\min}^{(2)} \frac{x_2}{||x||} \leq \mu_{\min}^{(2)}+\mathcal{O}(\epsilon).$$
\item
If $\frac{x_2}{||x||} \leq \epsilon$, then $\frac{x_1}{||x||} \geq \sqrt{1-\epsilon^2}$. 
By symmetry, we get
$$\beta_2(x) \leq \mu_{\min}^{(2)} \frac{x_1}{||x||}+\mathcal{O}(\epsilon) \leq \mu_{\min}^{(2)}+\mathcal{O}(\epsilon).$$
\end{enumerate}

\textbf{Case 2:} $x_1 \leq 0$ and $x_2 \geq 0$.
\begin{enumerate}
\item
If $\frac{x_1}{||x||} \leq -\epsilon$ and $\frac{x_2}{||x||} \geq \epsilon$, then $x_1 \leq -r_{-}$ and $x_2 \geq r_{+}$. We have 
$$\mu_1+U'(x_1) \geq |\mu_{\min}^{(2)}| \quad \mbox{and} \quad \mu_2+U'(x_2)-U'(x_1) \leq \mu_2+U'(x_2) \leq \mu_{\min}^{(2)}.$$
Thus, $$\beta_2(x) \leq \mu_{\min}^{(2)}\left(\frac{-x_1}{||x||}+\frac{x_2}{||x||}\right) \leq \mu_{\min}^{(2)}.$$
\item
If $-\epsilon \leq \frac{x_1}{||x||} \leq 0$, then $\frac{x_2}{||x||} \geq \sqrt{1-\epsilon^2}$. We get $x_2 \geq \sqrt{1-\epsilon^2}r_{+}/\epsilon \geq r_{+}$, so
$$\mu_2+U'(x_2)-U'(x_1) \leq \mu_{\min}^{(2)}.$$
Since $\mu_1+U'(x_1) \geq \mu_1$, we have
$$\beta_2(x) \leq -\mu_1 \epsilon+\mu_{\min}^{(2)} \sqrt{1-\epsilon^2} = \mu_{\min}^{(2)}+\mathcal{O}(\epsilon).$$
\item
If $0 \leq \frac{x_2}{||x||} \leq \epsilon$, thus $\frac{x_1}{||x||} \leq -\sqrt{1-\epsilon^2}$. 
We have $x_1 \leq -\sqrt{1-\epsilon^2}r_{-}/\epsilon \leq -r_{-}$, so
$$\mu_1+U'(x_1) \geq |\mu_{\min}^{(2)}|.$$
For $L$ large enough, $\mu_2+U'(x_2)-U'(x_1) \leq \mu_2+ \sup_{x >0} U'(x)-L \leq 0$. 
Therefore,
$$\beta_2(x) \leq |\mu_{\min}^{(2)}|\frac{x_1}{||x||} \leq \mu_{\min}^{(2)} \sqrt{1-\epsilon^2}=\mu_{\min}^{(2)}+\mathcal{O}(\epsilon).$$
\end{enumerate}

\textbf{Case 3:} $x_1 \geq 0$ and $x_2 \leq 0$.
\begin{enumerate}
\item
If $\frac{x_1}{||x||} \geq \epsilon$ and $\frac{x_2}{||x||} \leq -\epsilon$, then $x_1 \geq r_{+}$ and $x_2 \leq -r_{-}$. We have 
$$\mu_1+U'(x_1) \geq |\mu_{\min}^{(2)}|,$$
and
$$\mu_2+U'(x_2)-U'(x_1) \geq |\mu_{\min}^{(2)}|~\mbox{for}~L~\mbox{large enough}.$$
Thus, $$\beta_2(x) \leq \mu_{\min}^{(2)}\left(\frac{x_1}{||x||}+\frac{-x_2}{||x||}\right) \leq \mu_{\min}^{(2)}.$$
\item
If $\frac{x_1}{||x||} \leq \epsilon$, then $\frac{x_2}{||x||} \leq -\sqrt{1-\epsilon^2}$. 
We get $x_2 \leq -\sqrt{1-\epsilon^2}r_{-}/\epsilon \leq -r_{-}$, so
$$\mu_2+U'(x_2)-U'(x_1) \geq |\mu_{\min}^{(2)}|~\mbox{for}~L~\mbox{large enough}.$$
Moreover, $\mu_1+U'(x_1) = \mathcal{O}(1)$ since $x_1 \geq 0$. Thus,
$$\beta_2(x) \leq \mathcal{O}(\epsilon)+\mu_{\min}^{(2)} \sqrt{1-\epsilon^2} = \mu_{\min}^{(2)}+\mathcal{O}(\epsilon).$$
\item
If $-\epsilon \leq \frac{x_2}{||x||} \leq 0$, thus $\frac{x_1}{||x||} \geq \sqrt{1-\epsilon^2}$. 
We have $x_1 \geq \sqrt{1-\epsilon^2}r_{+}/\epsilon \geq r_{+}$, so
$$\mu_1+U'(x_1) \leq \mu_{\min}^{(2)}.$$
Since $\mu_2+U'(x_2)-U'(x_1) \geq \mu_2 - \sup_{x >0} U'(x)$, we get
$$\beta_2(x) \leq \mu_{\min}^{(2)} \sqrt{1-\epsilon^2}-\left(\mu_2 - \sup_{x >0} U'(x)\right) \epsilon=\mu_{\min}^{(2)}+\mathcal{O}(\epsilon).$$
\end{enumerate}

\textbf{Case 4:} $x_1 \leq 0$ and $x_2 \leq 0$.
\begin{enumerate}
\item
If $\frac{x_1}{||x||} \leq -\epsilon$ and $\frac{x_2}{||x||} \leq -\epsilon$, we have
\begin{align}
\beta_2(x) &=\mu_1 \frac{x_1}{||x||} +\frac{(x_1-x_2)U'(x_1)}{||x||}+\frac{x_2U'(x_2)}{||x||}+\mu_2 \frac{x_2}{||x||} \notag\\
                &\leq \mathcal{O}(1) +\frac{(x_1-x_2)U'(x_2)+x_2U'(x_2)}{||x||} \label{201} \\
                &=\mathcal{O}(1)+\frac{x_1U'(x_2)}{||x||}  \notag\\
                & \leq \mathcal{O}(1)-\epsilon U'(-\epsilon ||x||) \rightarrow -\infty \quad \mbox{as}~||x|| \rightarrow \infty, \notag
\end{align}
where the inequality \eqref{201} follows from the fact that $\mu_1 \frac{x_1}{||x||}$, $\mu_2 \frac{x_2}{||x||} = \mathcal{O}(1)$ and $(x_1-x_2)(U'(x_1)-U'(x_2)) \leq 0$.
\item
If $\frac{x_1}{||x||} \geq -\epsilon$, then $\frac{x_2}{||x||} \leq -\sqrt{1-\epsilon^2}$. We have
\begin{align}
\beta_2(x) &=(\mu_1+U'(x_1))\frac{x_1}{||x||}+(\mu_2+U'(x_2)-U'(x_1))\frac{x_2}{||x||} \notag\\
               &\leq -\mu_1 \epsilon - (\mu_2+U'(-\sqrt{1-\epsilon^2}||x||)-U'(-\epsilon ||x||)) \sqrt{1-\epsilon^2}  \notag\\
               &\rightarrow -\infty \quad \mbox{as}~||x|| \rightarrow \infty.\notag
\end{align}
\item
If $\frac{x_2}{||x||} \geq -\epsilon$, then $\frac{x_1}{||x||} \leq -\sqrt{1-\epsilon^2}$. We have
\begin{align}
\beta_2(x) &=\mu_1 \frac{x_1}{||x||} + \frac{(x_1-x_2)U'(x_1)}{||x||}+(\mu_2+U'(x_2))\frac{x_2}{||x||} \notag\\
               & \leq \mathcal{O}(1)+(\epsilon-\sqrt{1-\epsilon^2})U'(-\sqrt{1-\epsilon^2}||x||)-\mu_2 \epsilon  \notag\\
                & \rightarrow -\infty \quad \mbox{as}~||x|| \rightarrow \infty.\notag
\end{align}
\end{enumerate}
It suffices to take $r'_{\epsilon}>0$ such that $\beta_2(x) \leq \mu_{\min}^{(2)}$ for $x \leq 0$ and $||x|| \geq r'_{\epsilon}$.  
\end{proof}

By Lemma \ref{2d} and \eqref{100}, we get for $||x||$ large enough,
$$\mathcal{L}V(x) \leq \left( ||\Gamma|| \lambda^2+\mu_{\min}^{(2)} \lambda \right)V(x).$$
By taking $\lambda=-\frac{\mu_{\min}^{(2)}}{2||\Gamma||}$, we have
$$\mathcal{L}V(x) \leq \left( -\frac{1}{4||\Gamma||} (\mu_{\min}^{(2)})^2 \right)V(x).$$

\bigskip\noindent
\textbf{Step 2 ($d-1 \rightarrow d$):} We prove the following lemma by induction on $d$. The case $d = 2$ was proved in Lemma \ref{2d}.
\begin{lemma}
\label{claim}
$\beta_d(x) \leq \mu_{\min}^{(d)}$ for $||x||$ large enough, and $\beta_d(x) \rightarrow -\infty$ as $x<0$ and $||x|| \rightarrow \infty$.
\end{lemma}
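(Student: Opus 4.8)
The plan is to prove Lemma~\ref{claim} by induction on $d$, peeling the last coordinate off $\beta_d$. Since the coefficients $\mu_k+U'(x_k)-U'(x_{k-1})$ with $k\le d-1$ involve only $x':=(x_1,\dots,x_{d-1})$, one has the decomposition
\[
\beta_d(x)=\frac{||x'||}{||x||}\,\beta_{d-1}(x')+\bigl(\mu_d+U'(x_d)-U'(x_{d-1})\bigr)\frac{x_d}{||x||}.
\]
Writing $s:=x_d/||x||$ and $\theta:=||x'||/||x||=\sqrt{1-s^2}$ (so that $\theta+|s|\ge1$), the induction hypothesis supplies $\beta_{d-1}(x')\le\mu_{\min}^{(d-1)}\le\mu_{\min}^{(d)}$ once $||x'||$ is large, as well as $\beta_{d-1}(x')\to-\infty$ when $x'<0$ and $||x'||\to\infty$. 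I would organize the argument by the size and sign of $x_d$.

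First I would treat the benign regime $x_d\ge\epsilon||x||$. Then $x_d\ge r_+$ for $||x||\ge r_+/\epsilon$, so $U'\ge0$ and the definition of $r_+$ give $\mu_d+U'(x_d)-U'(x_{d-1})\le\mu_d+U'(x_d)\le\mu_{\min}^{(d)}$, whence the boundary term is at most $\mu_{\min}^{(d)}s$. For the remaining piece I use $\theta\,\beta_{d-1}(x')\le\theta\,\mu_{\min}^{(d)}$ when $||x'||$ is large, and the boundedness of $\beta_{d-1}$ on bounded sets together with $\theta\to0$ otherwise; combined with $\theta+s\ge1$ and $\mu_{\min}^{(d)}<0$ this yields $\beta_d(x)\le\mu_{\min}^{(d)}+\mathcal{O}(\epsilon)$. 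When instead $|x_d|$ is a small fraction of $||x||$ and no neighbouring coordinate is deeply negative, $||x'||$ is of order $||x||$ and the inductive bound on $\theta\,\beta_{d-1}(x')$ again closes the estimate, exactly as in the corresponding subcases of Lemma~\ref{2d}.

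The main obstacle is the negative regime, in particular $x_d\le-\epsilon||x||$ and the divergence claim for $x<0$, where $U'(x_d)\to\infty$ while the reflection term $-U'(x_{d-1})$ may be large and interact adversely with $x_d<0$. Bounding the two pieces of the decomposition separately destroys the cancellation, so here I would instead expand $\beta_d$ by the telescopic identity generalizing~\eqref{201},
\[
||x||\,\beta_d(x)=\sum_{k=1}^{d}\mu_k x_k+\sum_{k=1}^{d-1}U'(x_k)(x_k-x_{k+1})+U'(x_d)\,x_d,
\]
in which the dangerous term $-U'(x_{d-1})x_d$ is automatically paired with the compensating term $+U'(x_{d-1})x_{d-1}$. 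The linear part $\sum_k\mu_k x_k$ stays $\mathcal{O}(||x||)$, and the monotonicity of $U'$ on $\mathbb{R}_-$ (Assumption~\ref{hypo}(3a)) gives $(x_k-x_{k+1})(U'(x_k)-U'(x_{k+1}))\le0$, which lets me replace each $U'(x_k)$ by the value at its more negative neighbour and collapse the sum down to a single surviving term of the form $U'(\cdot)$ evaluated at a macroscopically negative argument, times a negative multiple of $||x||$.

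The decisive point, and where Assumption~\ref{hypo}(3c) is used, is to rule out cancellation of this divergence when two coordinates are both macroscopically negative but of different scales, say of order $-a||x||$ and $-b||x||$ with $a>b>0$. There the relevant quantity is a difference $U'(-a||x||)-U'(-b||x||)$; monotonicity alone only makes it nonnegative, but the ratio condition $U'(ay)/U'(by)\to\infty$ as $y\to-\infty$, together with $U'(-b||x||)\to\infty$, forces this difference to $+\infty$, so that $\beta_d(x)\to-\infty$. This is precisely the mechanism already visible in Case~4 of Lemma~\ref{2d}. Assembling the three regimes and absorbing the accumulated $\mathcal{O}(\epsilon)$ into a relabelling of $\epsilon$ completes the inductive step and proves both assertions of the lemma.
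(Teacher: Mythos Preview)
Your decomposition by peeling off the last coordinate leaves a genuine gap. Consider the configuration where $x_d\in(-\epsilon\,||x||,0)$, $x_{d-1}\le -c\,||x||$ for some fixed $c>\epsilon$, and some earlier coordinate is positive. Then the boundary term
\[
(\mu_d+U'(x_d)-U'(x_{d-1}))\frac{x_d}{||x||}
\]
is dominated by $-U'(x_{d-1})\cdot\frac{x_d}{||x||}$, which is \emph{positive} and of order $\epsilon\,U'(-c\,||x||)\to+\infty$. Your inductive bound supplies only $\beta_{d-1}(x')\le\mu_{\min}^{(d-1)}$, which cannot absorb this divergence; the stronger claim $\beta_{d-1}(x')\to-\infty$ is available only when $x'<0$, and here $x'$ has mixed signs. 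Your paragraph on the small-$|x_d|$ regime explicitly excludes ``neighbouring coordinate deeply negative,'' and your telescopic paragraph invokes monotonicity of $U'$ on $\mathbb{R}_-$, which again needs all coordinates negative. So this configuration falls through every case of your outline.

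The paper closes exactly this hole by splitting not at $d$ but at $i^{+}:=\sup\{i:\,x_i\ge0\}$. The point is that the cross term one discards is then $-U'(x_{i^{+}-1})\,\dfrac{x_{i^{+}}}{||x||}\le 0$ because $x_{i^{+}}\ge 0$, so the decomposition
\[
\beta_d(x)\;\le\;\beta_{i^{+}-1}(x_1,\dots,x_{i^{+}-1})\frac{||(x_1,\dots,x_{i^{+}-1})||}{||x||}
+\beta_{d-i^{+}+1}(x_{i^{+}},\dots,x_d)\frac{||(x_{i^{+}},\dots,x_d)||}{||x||}
\]
loses nothing dangerous, and both pieces have dimension $\le d-1$ so the plain induction hypothesis applies. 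The fully negative case $i^{+}=0$ is then handled by the telescopic device you describe, but there too the paper needs more than a single collapse: it iteratively removes the argmin coordinate (this is where the inequality~\eqref{18} is used) and treats the residual ordered configuration $x_1\le\cdots\le x_d\le 0$ by a separate peeling argument. Your sketch of that part is in the right spirit, but the mixed-sign gap above is structural and would require either the paper's $i^{+}$-split or a genuinely stronger induction hypothesis than the lemma states.
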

\begin{proof} 
Let $i^{+}:=\sup\{i \geq 0;\, x_i \geq 0\}$, with the convention $i^{+}=0$ if $x<0$. 
There are three cases.

{\bf Case 1:} If $i^{+} \geq 2$, then we have
\begin{align}
\beta_d(x) &=\left[(\mu_1+U'(x_1))\frac{x_1}{||x||}+\sum_{k=2}^{i^{+}-1}(\mu_k+U'(x_k)-U'(x_{k-1}))\frac{x_k}{||x||} \right]-U'(x_{i^{+}-1})\frac{x_{i^{+}}}{||x||} \notag\\
                   &\quad +\left[(\mu_{i^{+}}+U'(x_{i^{+}}))\frac{x_{i^{+}}}{||x||}+\sum_{k=i^{+}+1}^{d}(\mu_k+U'(x_k)-U'(x_{k-1}))\frac{x_k}{||x||} \right]  \notag\\
                   & \leq \beta_{i^{+}-1}(x_1, \ldots, x_{i^{+}-1})\frac{\sqrt{\sum_{k=1}^{i^{+}-1}x_k^2}}{||x||}+\beta_{d-i^{+}+1}(x_{i^{+}}, \ldots, x_d)\frac{\sqrt{\sum_{k=i^{+}}^{d}x_k^2}}{||x||}.  \notag                 
\end{align}
By induction hypothesis, $\beta_{i^{+}-1}(x_1, \ldots, x_{i^{+}-1}) \le \mu_{\min}^{(i^{+}-1)}$ for $||(x_1, \ldots, x_{i^{+}-1})|| > r_{d-1}$
and
$\beta_{d-i^{+}+1}(x_{i^{+}}, \ldots, x_{d}) \le \mu_{\min}^{(d -i^{+}+1)}$ for $||(x_{i^{+}}, \ldots, x_{d})|| > r_{d-1}$.
There are three subcases. 
If $||(x_1, \ldots, x_{i^{+}-1})|| \le r_{d-1}$, then 
$$\frac{\sqrt{\sum_{k=1}^{i^{+}-1}x_k^2}}{||x||} \rightarrow 0 \quad \mbox{and} \quad \frac{\sqrt{\sum_{k=i^{+}}^{d}x_k^2}}{||x||} \rightarrow 1 \quad \mbox{as } ||x|| \rightarrow \infty.$$
So we get
$$\beta_d(x) \leq o(1) + \mu_{\min}^{(d -i^{+}+1)} \le \mu_{\min}^{(d)}.$$
The same result holds if $||(x_{i^{+}}, \ldots, x_{d})|| \le r_{d-1}$. 
Assume that $||(x_1, \ldots, x_{i^{+}-1})|| > r_{d-1}$ and $||(x_{i^{+}}, \ldots, x_{d})|| > r_{d-1}$. We have
$$\beta_d(x) \leq \mu_{\min}^{(i^{+}-1)} \frac{\sqrt{\sum_{k=1}^{i^{+}-1}x_k^2}}{||x||}+\mu_{\min}^{(d -i^{+}+1)}\frac{\sqrt{\sum_{k=i^{+}}^{d}x_k^2}}{||x||} \leq \mu_{\min}^{(d)}.$$

{\bf Case 2:} If $i^{+}=1$, then we have
$$\beta_d(x) =\mu_1 \frac{x_1}{||x||}+\frac{(x_1-x_2)U'(x_1)}{||x||} +\beta_{d-1}(x_2, \ldots, x_d)\frac{\sqrt{||x||^2-x_1^2}}{||x||}.$$
\begin{enumerate}
\item
If $\frac{x_1}{||x||} \leq \epsilon$, then $\frac{\sqrt{||x||^2-x_1^2}}{||x||} \geq \sqrt{1-\epsilon^2}$.
By induction hypothesis,
$\beta_{d-1}(x_2, \ldots, x_d) \rightarrow -\infty$ as $||x|| \rightarrow \infty$. 
Therefore,
$$\beta_d(x) \leq \mathcal{O}(1)+\sqrt{1-\epsilon^2}\beta_{d-1}(x_2, \ldots, x_d) \rightarrow -\infty \quad \mbox{as}~||x|| \rightarrow \infty.$$
\item
If $\frac{x_1}{||x||} \geq \epsilon$, then $U'(x_1) \leq \epsilon$ for $||x||$ large enough. 
As a consequence,
$$\beta_d(x) \leq \mu_1 \frac{x_1}{||x||}+ \epsilon +\beta_{d-1}(x_2, \ldots, x_d) \frac{\sqrt{||x||^2-x_1^2}}{||x||}.$$
Similar as Case $1$, we get
$\beta_d(x) \leq  \mu_{\min}^{(d)}$.
\end{enumerate}

{\bf Case 3:} If $i^{+}=0$, i.e. $x <0$, then we write
$\beta_d(x)=\sum_{k=1}^d \mu_k \frac{x_k}{||x||}+\gamma_d(x),$
where
$$\gamma_d(x):=\sum_{k=1}^{d-1}\frac{(x_k-x_{k+1})U'(x_k)}{||x||}+\frac{x_dU'(x_d)}{||x||}.$$
It suffices to prove that $\gamma_d(x) \rightarrow -\infty$ as $||x|| \rightarrow \infty$. 
Let $i_1:=\underset{1 \leq k \leq d}{\argmin}~x_k$.

If $i_1 \geq 2$, then we have
\begin{align}
\gamma_d(x) & = \sum_{k=1}^{i_1-1}\frac{(x_k-x_{k+1})U'(x_k)}{||x||}+\frac{(x_{i_1}-x_{i_1+1})U'(x_{i_1})}{||x||} \notag\\
                          &\qquad \qquad \quad \quad \quad \quad+\sum_{k=i_1+1}^{d-1}\frac{(x_k-x_{k+1})U'(x_k)}{||x||}+\frac{x_dU'(x_d)}{||x||} \notag\\
& \leq \sum_{k=1}^{i_1-1}\frac{(x_k-x_{k+1})U'(x_k)}{||x||} +\frac{(x_{i_1}-x_{i_1+1})U'(x_{i_1-1})}{||x||} \notag\\
&\qquad \qquad \quad \quad \quad \quad +\sum_{k=i_1+1}^{d-1}\frac{(x_k-x_{k+1})U'(x_k)}{||x||}+\frac{x_dU'(x_d)}{||x||}  \label{18} \\
&=\sum_{k=1}^{i_1-2}\frac{(x_k-x_{k+1})U'(x_k)}{||x||} +\frac{(x_{i_1-1}-x_{i_1+1})U'(x_{i_1-1})}{||x||} \notag\\
&\qquad \qquad \quad \quad \quad \quad +\sum_{k=i_1+1}^{d-1}\frac{(x_k-x_{k+1})U'(x_k)}{||x||}+\frac{x_dU'(x_d)}{||x||} \notag\\
&=\gamma_{d-1}(x_1, \ldots, x_{i_1 - 1}, x_{i_1 + 1}, \ldots, x_d) \frac{\sqrt{||x||^2-x_{i_1}^2}}{||x||}, \notag
\end{align}
where the inequality follows from the fact that $x_{i_1}-x_{i_1+1} \leq 0$ and $U'(x_{i_1}) \geq U'(x_{i_1-1})$ by minimality of $x_{i_1}$.
If $\frac{x_{i_1}}{||x||} \geq -(1-\epsilon)$, then
$$\gamma_d(x) \leq \gamma_{d-1}(x_1, \ldots, x_{i_1 - 1}, x_{i_1 + 1}, \ldots, x_d) \sqrt{2 \epsilon -\epsilon^2} \rightarrow -\infty \quad \mbox{as}~||x|| \rightarrow \infty.$$
If $\frac{x_{i_1}}{||x||} \leq -(1-\epsilon)$, then $\frac{x_k}{||x||} \geq -\epsilon$ for all $k \neq i_1$. 
Consequently, 
$$
\gamma_d(x) \leq \mathcal{O}(U'(-\epsilon||x||))+(-1+2\epsilon)U'(-(1-\epsilon)||x||) \rightarrow -\infty \quad \mbox{as }||x|| \rightarrow \infty.
$$

If $i_1=1$, let $i_2:=\argmin_{2 \le k \le d} x_k$. The same argument shows that $\gamma_d(x) \rightarrow -\infty$ as $||x|| \rightarrow \infty$ for $i_2 \geq 3$. 
We continue this algorithm and the only remaining case is $x_1 \leq x_2 \leq \cdots \leq x_d \leq 0$. 
In this case, we get
\begin{align}
\gamma_d(x )&\leq \frac{\sum_{k=1}^{d-1}(x_k-x_{k+1})+x_d}{||x||}U'(x_d) \notag\\
                         &=\frac{x_1}{||x||}U'(x_d) \leq -\frac{1}{\sqrt{d}}U'(x_d) \rightarrow -\infty \quad \mbox{as}~x_d  \rightarrow -\infty. \notag
\end{align}
Now assume that $x_d \geq -r^{*}$ for some $r^{*}>0$. We have
\begin{align*}
\gamma_d(x) &\leq \frac{x_1-x_d}{||x||}U'(x_{d-1})+\frac{x_d}{||x||}U'(x_d)\\
&=\left(-\frac{1}{\sqrt{d}}+o(1)\right)U'(x_{d-1})+\mathcal{O}(1) \rightarrow -\infty \quad \mbox{as}~x_{d-1}  \rightarrow -\infty.
\end{align*}
So it suffices to consider the case $x_{d} \geq x_{d-1} \geq -r^{*}$ for some $r^{*}>0$. We repeat the procedure until $x_d \geq x_{d-1} \geq \cdots \geq x_2 \geq -r^{*}$ for some $r^{*}>0$. 
Then we have 
$$\frac{x_1}{||x||} \leq -\frac{\sqrt{||x||^2-dr^{*2}}}{||x||} \rightarrow -1 \quad \mbox{as}~||x|| \rightarrow \infty.$$
The above condition implies that
$$\gamma_d(x) \leq (-1+o(1)) U'(-(1+o(1))||x||) +\mathcal{O}(1) \rightarrow -\infty \quad \mbox{as}~||x|| \rightarrow \infty.$$
\end{proof}

By Lemma \ref{claim} and \eqref{1000}, we get for $||x||$ large enough,
$$\mathcal{L}V(x) \leq \left(\frac{1}{2}d||\Gamma||\lambda^2+\mu_{\min}^{(d)}\lambda \right)V(x).$$
By taking $\lambda= -\frac{\mu_{\min}^{(d)}}{d||\Gamma||}$, we have
$$\mathcal{L}V(x) \leq \left(-\frac{1}{2d||\Gamma||} (\mu_{\min}^{(d)})^2 \right)V(x).$$
\end{proof}
%-------------------------------------------------------------------------------------------------
\section{Brownian diffusions with hard and soft reflection}
\label{s5}
\quad In this section we apply Theorem \ref{keylem} and Corollary \ref{mainbis} to a class of Brownian diffusions with soft reflection, including the O'Connell-Yor process. 
We compare the Brownian TASEP to these diffusions with soft reflection, and present several conjectures regarding the rate of convergence as the dimension $d$ is large. \\\\
{\bf Brownian TASEP}

\quad Consider the Brownian TASEP on the real line. There are $d$ particles with positions $Z^h_1, \cdots,Z^h_d$ such that $Z^h_1(t) \leq \cdots \leq Z^h_d(t)$ for all $t \geq 0$.
The leftmost particle $Z^h_1$ evolves as Brownian motion with drift $\mu_1$.
The second leftmost particle $Z^h_2$ evolves as Brownian motion with drift $\mu_2$ reflected off $Z^h_1$, and so on. 

\quad It is well known that this process is governed by the following SDE:
\begin{equation*}
dZ_1^h(t)=\mu_1 dt+dB_1(t),
\end{equation*}
\begin{equation}
\label{TASEP}
dZ_i^h(t)=\mu_i dt+\frac{1}{\sqrt{2}} (dL_{i-1,i}(t)-dL_{i,i+1}(t))+dB_i(t) \quad  \mbox{for}~2 \leq i \leq d,
\end{equation}
where $B:=(B_i(t); \, t \ge 0)_{1 \le i \le d}$ is a $d$-dimensional Brownian motion with the identity covariance matrix, 
and 
$L_{j,j+1}$ is the local time process of the semimartingale $(Z_{j+1}^h-Z_j^h)/\sqrt{2}$, with the convention $L_{d,d+1}:=0$. 
See Pal and Pitman \cite[Section 2]{PalPitman}.

\quad We consider the gap process $G^h:=(Z_{i+1}^h(t)-Z_i^h(t); \, t \geq 0)_{1 \leq i \leq d-1}$ of the Brownian TASEP.
It was proved in \cite[Section 4]{BFK} that the gap process $(G^h(t); \, t \geq 0)$ is a $(d-1)$-dimensional SRBM$(\Gamma, \tilde{\mu}, R)$, with the reflection matrix 
\begin{equation*}
R:=\begin{pmatrix} 
1 & -1/2 & 0& \cdots &0\\ 
-1/2 & 1 & -1/2& \cdots &0 \\ 
\vdots & \ddots & \ddots& \ddots &\vdots \\ 
0 & \cdots& -1/2 &  1 & -1/2\\
 0 & \cdots & 0 &-1/2& 1\\ 
\end{pmatrix},
\end{equation*}
the drift $\tilde{\mu}:=(\mu_{i+1}-\mu_i)_{1 \leq i \leq d-1}$, and the covariance matrix $\Gamma := 2R$. 
Let $\nu_i:=\sum_{k=1}^i\mu_k-\frac{k}{d}\sum_{k=1}^d\mu_k$ for $1 \leq i \leq d-1$. 
Sarantsev \cite{Sar17} proved that the gap process $G^h$ of the Brownian TASEP has a unique stationary distribution if and only if $\nu_i <0$ for all $1 \leq i \leq d-1$. 
Further by letting $\mathcal{L}^h$ be the infinitesimal generator of $G^h$, there exist a norm-like function $V$ and $b<\infty$ such that
\begin{equation*}
\mathcal{L}^h V \le -K^h V + b 1_{\mathcal{B}_r} \quad \mbox{for}~r~\mbox{large enough},
\end{equation*}
where
\begin{equation}
\label{Khh}
K^h: = \frac{4}{d}\left(1-\cos\frac{\pi}{d}\right)^3\left(1+\cos \frac{\pi}{d}\right)^{-1} \min_{1 \leq i\leq d-1}\nu_i^2.
\end{equation}
Consequently, the gap process $G^h$ is uniformly exponentially ergodic. \\\\
\textbf{Brownian diffusions with soft reflection}

\quad We replace the local time process in \eqref{TASEP} with soft reflection $U'$. 
Precisely, the particle system $Z^s_1, \cdots ,Z^s_d$ is governed by the following SDE:
\begin{equation*}
dZ_1^s(t)=\mu_1 dt+dB_1(t),
\end{equation*}
\begin{equation}
\label{Polymer}
dZ_i^s(t)=\left(\mu_i +U'(Z_{i+1}^s(t)-Z_{i}^s(t))\right)dt+dB_i(t) \quad  \mbox{for}~2 \leq i \leq d,
\end{equation}
where $U$ is a potential function satisfying Assumption \ref{hypo}. 
This multidimensional diffusion is the O'Connell-Yor process with the choice $U(x) = - e^{-x}$.

\quad Consider the gap process $G^s:=(Z_{i+1}^s(t)-Z_i^s(t); \, t \geq 0)_{1 \leq i \leq d-1}$ of the Brownian diffusion with soft reflection. 
We write
\begin{equation*}
dG^s(t)=\left(\tilde{\mu}+ \sum_{i=1}^d U'(G^s_i(t))r_{i}\right)dt+d\tilde{B}(t).
\end{equation*}
So the gap process $(G^s(t); \, t \ge 0)$ is a $(d-1)$-dimensional GRBM$(\Gamma, \tilde{\mu}, R, U)$, 
where the reflection matrix $R$ is given by \eqref{trid}, the drift $\tilde{\mu}$ and the covariance matrix $\Gamma$ are the same as those defined for the Brownian TASEP. 
The following proposition is a consequence of Theorem \ref{keylem}.
\begin{proposition}
\label{ROCS}
Assume that the input data $(R,\tilde{\mu},\Gamma,U)$ satisfy the assumptions in Theorem \ref{keylem}.
Then the gap process $G^s$ of the Brownian diffusion with soft reflection has a unique stationary distribution.
Let $\mathcal{L}^s$ be the infinitesimal generator of $G^s$. Then there exist a norm-like function $V$ and $b < \infty$ such that
\begin{equation*}
\mathcal{L}^sV \le -K^s V + b 1_{\mathcal{B}_r} \quad \mbox{for}~r~\mbox{large enough},
\end{equation*}
where 
\begin{equation}
\label{Kt}
K^s :=\frac{1}{4d (1+\cos \frac{\pi}{d})} \min_{1 \leq i \leq d}|\tilde{\mu}_i|^2.
\end{equation}
Consequently, the gap process $G^s$ is uniformly exponentially ergodic.
\end{proposition}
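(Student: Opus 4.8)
The plan is to read Proposition \ref{ROCS} as a direct specialization of Theorem \ref{keylem} and Corollary \ref{mainbis} to the $(d-1)$-dimensional gap process $G^s$, the one substantive computation being the evaluation of the spectral norm $\|\Gamma\|$ so that the abstract constant in \eqref{GDCbis} turns into the explicit $K^s$ of \eqref{Kt}.

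First I would check that the input data $(\Gamma, \tilde\mu, R, U)$ of the gap process meet Assumption \ref{hypo} in dimension $d-1$. The reflection matrix $R$ is exactly the tridiagonal matrix \eqref{trid} by construction, and $U$ satisfies Assumption \ref{hypo}(3) by hypothesis; the drift $\tilde\mu = (\mu_{i+1} - \mu_i)_{1 \leq i \leq d-1}$ is assumed strictly negative, which is precisely the hypothesis $\mu < 0$ of Theorem \ref{keylem}. The remaining point, strict positive definiteness of $\Gamma$, I would settle together with the norm computation. Since the generator $\mathcal{L}^s$ of $G^s$ is of the form \eqref{IGexp}, Theorem \ref{keylem} applies verbatim with $d$ replaced by $d-1$.

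The key step is to identify $\Gamma$, defined as twice the symmetric tridiagonal reflection matrix of the Brownian TASEP, as the $(d-1)\times(d-1)$ symmetric Toeplitz matrix with $2$ on the diagonal and $-1$ on the sub- and super-diagonals. Its eigenvalues are $2 - 2\cos\frac{k\pi}{d}$ for $k = 1, \ldots, d-1$, all strictly positive (this supplies the missing positive definiteness), and the largest, at $k = d-1$, yields
\[
\|\Gamma\| = 2 - 2\cos\frac{(d-1)\pi}{d} = 2\left(1 + \cos\frac{\pi}{d}\right).
\]
Substituting this into the conclusion of Theorem \ref{keylem} in dimension $d-1$ gives, for arbitrarily small $\epsilon > 0$,
\[
\mathcal{L}^s V \leq -\frac{1}{4(d-1)\left(1 + \cos\frac{\pi}{d}\right)}\left(\min_{1 \leq i \leq d-1}|\tilde\mu_i|^2 - \epsilon\right) V + b\,1_{\mathcal{B}_r}.
\]

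Finally I would reconcile this with the stated $K^s$. Because $V \geq 1$ and $\frac{1}{d-1} > \frac{1}{d}$, choosing $\epsilon < \frac{1}{d}\min_{i}|\tilde\mu_i|^2$ makes the coefficient above at least $K^s = \frac{1}{4d(1 + \cos\frac{\pi}{d})}\min_i|\tilde\mu_i|^2$, so $\mathcal{L}^s V \leq -K^s V + b\,1_{\mathcal{B}_r}$ for $r$ large, exactly as claimed (with the understanding that the minimum in \eqref{Kt} ranges over the $d-1$ components of $\tilde\mu$). The existence and uniqueness of the stationary distribution together with the uniform exponential ergodicity of $G^s$ then follow at once from Corollary \ref{mainbis}. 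The only real obstacle is the spectral-norm evaluation; the remaining steps are routine constant-matching, where the single point to watch is that replacing the sharp factor $1/(d-1)$ by the advertised $1/d$ only strengthens the upper bound and is therefore harmless.
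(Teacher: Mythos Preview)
Your proposal is correct and is exactly the argument the paper has in mind: the paper gives no proof beyond the sentence ``The following proposition is a consequence of Theorem \ref{keylem},'' and your specialization of Theorem \ref{keylem} (in dimension $d-1$) together with the explicit eigenvalue computation $\|\Gamma\| = 2(1+\cos\frac{\pi}{d})$ supplies precisely the missing details, including the harmless weakening from $1/(d-1)$ to $1/d$ and the observation that the index range in \eqref{Kt} should be $1\le i\le d-1$.
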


\quad From the exponential drift conditions \eqref{Khh}--\eqref{Kt}, we see that
\begin{equation}
\label{comparison}
K^h \sim d^{-7} \quad \mbox{and} \quad K^{s} \sim d^{-1} \quad \mbox{as } d \rightarrow \infty.
\end{equation}
This suggests that Brownian diffusions with soft reflection converges faster than those with hard reflection. 
Blanchet and Chen \cite{BC16} proved a bound $d^{-4} (\log d)^{-2}$ for the exponent of the gap process $G^h$. 
The exact rate of convergence of the gap process $G^s$ (resp. $G^h$) remains open.
We leave these problems for future research.

\bigskip

{\bf Acknowledgment:} 
We thank Misha Shkolnikov and Andrey Sarantsev for helpful discussions.
We are also grateful to an Associate Editor and two anonymous referees for their valuable suggestions and various pointers to the literature, which improved the presentation of the paper.

%-------------------------------------------------------------------------------------------------
\bibliographystyle{plain}
\bibliography{Rate}

\def\cprime{$'$}
\begin{thebibliography}{10}

\bibitem{BCG}
Dominique Bakry, Patrick Cattiaux, and Arnaud Guillin.
\newblock Rate of convergence for ergodic continuous {M}arkov processes:
  {L}yapunov versus {P}oincar\'e.
\newblock {\em J. Funct. Anal.}, 254(3):727--759, 2008.

\bibitem{BFK}
Adrian~D. Banner, Robert Fernholz, and Ioannis Karatzas.
\newblock Atlas models of equity markets.
\newblock {\em Ann. Appl. Probab.}, 15(4):2296--2330, 2005.

\bibitem{BC16}
Jose Blanchet and Xinyun Chen.
\newblock Rates of convergence to stationarity for multidimensional {RBM}.
\newblock 2016.
\newblock arXiv:1601.04111.

\bibitem{BR}
V.~I. Bogachev and M.~R{\"o}ckner.
\newblock A generalization of {K}has\cprime minski\u\i's theorem on the
  existence of invariant measures for locally integrable drifts.
\newblock {\em Teor. Veroyatnost. i Primenen.}, 45(3):417--436, 2000.

\bibitem{BRS}
V.~I. Bogachev, M.~R{\"o}ckner, and V.~Shtannat.
\newblock Uniqueness of solutions of elliptic equations and uniqueness of
  invariant measures of diffusions.
\newblock {\em Mat. Sb.}, 193(7):3--36, 2002.

\bibitem{BC14}
Alexei Borodin and Ivan Corwin.
\newblock Macdonald processes.
\newblock {\em Probab. Theory Related Fields}, 158(1-2):225--400, 2014.

\bibitem{BCF}
Alexei Borodin, Ivan Corwin, and Patrik Ferrari.
\newblock Free energy fluctuations for directed polymers in random media in
  {$1+1$} dimension.
\newblock {\em Comm. Pure Appl. Math.}, 67(7):1129--1214, 2014.

\bibitem{Bra11}
Maury Bramson.
\newblock A positive recurrent reflecting {B}rownian motion with divergent
  fluid path.
\newblock {\em Ann. Appl. Probab.}, 21(3):951--986, 2011.

\bibitem{Bra10}
Maury Bramson, J.~G. Dai, and J.~M. Harrison.
\newblock Positive recurrence of reflecting {B}rownian motion in three
  dimensions.
\newblock {\em Ann. Appl. Probab.}, 20(2):753--783, 2010.

\bibitem{BL07}
Amarjit Budhiraja and Chihoon Lee.
\newblock Long time asymptotics for constrained diffusions in polyhedral
  domains.
\newblock {\em Stochastic processes and their applications}, 117(8):1014--1036,
  2007.

\bibitem{CG17}
Patrick Cattiaux and Arnaud Guillin.
\newblock Hitting times, functional inequalities, {L}yapunov conditions and
  uniform ergodicity.
\newblock {\em Journal of Functional Analysis}, 272(6):2361--2391, 2017.

\bibitem{Chen96}
Hong Chen.
\newblock A sufficient condition for the positive recurrence of a
  semimartingale reflecting {B}rownian motion in an orthant.
\newblock {\em Ann. Appl. Probab.}, 6(3):758--765, 1996.

\bibitem{DH12}
J.~G. Dai and J.~M. Harrison.
\newblock Reflecting {B}rownian motion in three dimensions: a new proof of
  sufficient conditions for positive recurrence.
\newblock {\em Math. Methods Oper. Res.}, 75(2):135--147, 2012.

\bibitem{DMT}
D.~Down, S.~P. Meyn, and R.~L. Tweedie.
\newblock Exponential and uniform ergodicity of {M}arkov processes.
\newblock {\em Ann. Probab.}, 23(4):1671--1691, 1995.

\bibitem{DI91}
Paul Dupuis and Hitoshi Ishii.
\newblock On lipschitz continuity of the solution mapping to the skorokhod
  problem, with applications.
\newblock {\em Stochastics}, 35(1):31--62, 1991.

\bibitem{DW94}
Paul Dupuis and Ruth~J. Williams.
\newblock Lyapunov functions for semimartingale reflecting {B}rownian motions.
\newblock {\em Ann. Probab.}, 22(2):680--702, 1994.

\bibitem{EGZ18}
Andreas Eberle, Arnaud Guillin, and Raphael Zimmer.
\newblock Quantitative {H}arris-type theorems for diffusions and
  {M}ckean--{V}lasov processes.
\newblock {\em Trans. Amer. Math. Soc.}, 2018.

\bibitem{GW}
Peter Glynn and Ward Whitt.
\newblock Departures from many queues in series.
\newblock {\em Ann. Appl. Probab.}, 1(4):546--572, 1991.

\bibitem{HW92}
J.~M. Harrison and Ruth~J. Williams.
\newblock Brownian models of feedforward queueing networks: Quasireversibility
  and product form solutions.
\newblock {\em Ann. Appl. Probab.}, 2(2):263--293, 1992.

\bibitem{Har73}
J.~Michael Harrison.
\newblock The heavy traffic approximation for single server queues in series.
\newblock {\em J. Appl. Probability}, 10:613--629, 1973.

\bibitem{Har78}
J.~Michael Harrison.
\newblock The diffusion approximation for tandem queues in heavy traffic.
\newblock {\em Adv. in Appl. Probab.}, 10(4):886--905, 1978.

\bibitem{HR81b}
J.~Michael Harrison and Martin~I. Reiman.
\newblock On the distribution of multidimensional reflected {B}rownian motion.
\newblock {\em SIAM J. Appl. Math.}, 41(2):345--361, 1981.

\bibitem{HR81}
J.~Michael Harrison and Martin~I. Reiman.
\newblock Reflected {B}rownian motion on an orthant.
\newblock {\em Ann. Probab.}, 9(2):302--308, 1981.

\bibitem{IPBKF}
Tomoyuki Ichiba, Vassilios Papathanakos, Adrian Banner, Ioannis Karatzas, and
  Robert Fernholz.
\newblock Hybrid atlas models.
\newblock {\em Ann. Appl. Probab.}, 21(2):609--644, 2011.

\bibitem{KR14}
Weining Kang and Kavita Ramanan.
\newblock Characterization of stationary distributions of reflected diffusions.
\newblock {\em Ann. Appl. Probab.}, 24(4):1329--1374, 2014.

\bibitem{Lep}
Dominique L\'{e}pingle.
\newblock Brownian motion in the quadrant with oblique repulsion from the
  sides.
\newblock 2012.
\newblock arXiv: 1208.6399.

\bibitem{LS84}
P.~L. Lions and A.~S. Sznitman.
\newblock Stochastic differential equations with reflecting boundary
  conditions.
\newblock {\em Comm. Pure Appl. Math.}, 37(4):511--537, 1984.

\bibitem{LMT}
Robert~B. Lund, Sean~P. Meyn, and Richard~L. Tweedie.
\newblock Computable exponential convergence rates for stochastically ordered
  {M}arkov processes.
\newblock {\em Ann. Appl. Probab.}, 6(1):218--237, 1996.

\bibitem{LT}
Robert~B. Lund and Richard~L. Tweedie.
\newblock Geometric convergence rates for stochastically ordered {M}arkov
  chains.
\newblock {\em Math. Oper. Res.}, 21(1):182--194, 1996.

\bibitem{MTbook}
Sean Meyn and Richard~L. Tweedie.
\newblock {\em Markov chains and stochastic stability}.
\newblock Cambridge University Press, Cambridge, second edition, 2009.
\newblock With a prologue by Peter W. Glynn.

\bibitem{MT2}
Sean~P. Meyn and R.~L. Tweedie.
\newblock Stability of {M}arkovian processes. {II}. {C}ontinuous-time processes
  and sampled chains.
\newblock {\em Adv. in Appl. Probab.}, 25(3):487--517, 1993.

\bibitem{MT3}
Sean~P. Meyn and R.~L. Tweedie.
\newblock Stability of {M}arkovian processes. {III}. {F}oster-{L}yapunov
  criteria for continuous-time processes.
\newblock {\em Adv. in Appl. Probab.}, 25(3):518--548, 1993.

\bibitem{OC12}
Neil O'Connell.
\newblock Directed polymers and the quantum {T}oda lattice.
\newblock {\em Ann. Probab.}, 40(2):437--458, 2012.

\bibitem{OCO}
Neil O'Connell and Janosch Ortmann.
\newblock Product-form invariant measures for {B}rownian motion with drift
  satisfying a skew-symmetry type condition.
\newblock {\em ALEA Lat. Am. J. Probab. Math. Stat.}, 11(1):307--329, 2014.

\bibitem{OY}
Neil O'Connell and Marc Yor.
\newblock Brownian analogues of {B}urke's theorem.
\newblock {\em Stochastic Process. Appl.}, 96(2):285--304, 2001.

\bibitem{PalPitman}
Soumik Pal and Jim Pitman.
\newblock One-dimensional {B}rownian particle systems with rank-dependent
  drifts.
\newblock {\em Ann. Appl. Probab.}, 18(6):2179--2207, 2008.

\bibitem{Reiman}
Martin~I. Reiman.
\newblock Open queueing networks in heavy traffic.
\newblock {\em Math. Oper. Res.}, 9(3):441--458, 1984.

\bibitem{RT}
G.~O. Roberts and R.~L. Tweedie.
\newblock Rates of convergence of stochastically monotone and continuous time
  {M}arkov models.
\newblock {\em J. Appl. Probab.}, 37(2):359--373, 2000.

\bibitem{RR}
Gareth~O. Roberts and Jeffrey~S. Rosenthal.
\newblock Quantitative bounds for convergence rates of continuous time {M}arkov
  processes.
\newblock {\em Electron. J. Probab.}, 1, 1996.

\bibitem{RW}
L.~C.~G. Rogers and David Williams.
\newblock {\em Diffusions, {M}arkov processes, and martingales. {V}ol. 2}.
\newblock Cambridge Mathematical Library. Cambridge University Press,
  Cambridge, 2000.
\newblock It{\^o} calculus, Reprint of the second (1994) edition.

\bibitem{Rminor}
Jeffrey~S. Rosenthal.
\newblock Minorization conditions and convergence rates for {M}arkov chain
  {M}onte {C}arlo.
\newblock {\em J. Amer. Statist. Assoc.}, 90(430):558--566, 1995.

\bibitem{sara1}
Andrey Sarantsev.
\newblock Infinite systems of competing {B}rownian particles.
\newblock {\em Ann. Inst. H. Poincar\'e Probab. Statist.}, 53(4):2279--2315,
  2017.

\bibitem{Sar17}
Andrey Sarantsev.
\newblock Reflected {B}rownian motion in a convex polyhedral cone: tail
  estimates for the stationary distribution.
\newblock {\em Journal of Theoretical Probability}, 30(3):1200--1223, 2017.

\bibitem{Shk}
Mykhaylo Shkolnikov.
\newblock Competing particle systems evolving by interacting {L}\'evy
  processes.
\newblock {\em Ann. Appl. Probab.}, 21(5):1911--1932, 2011.

\bibitem{ST}
O.~Stramer and R.~L. Tweedie.
\newblock Existence and stability of weak solutions to stochastic differential
  equations with non-smooth coefficients.
\newblock {\em Statist. Sinica}, 7(3):577--593, 1997.

\bibitem{TT18}
Wenpin Tang and Li-Cheng Tsai.
\newblock Optimal surviving strategy for drifted {B}rownian motions with
  absorption.
\newblock {\em Ann. Probab.}, 46(3):1597--1650, 2018.

\bibitem{TW93}
L.~M. Taylor and R.~J. Williams.
\newblock Existence and uniqueness of semimartingale reflecting {B}rownian
  motions in an orthant.
\newblock {\em Probab. Theory Related Fields}, 96(3):283--317, 1993.

\bibitem{WilliamsP}
R.~J. Williams.
\newblock Reflected {B}rownian motion with skew symmetric data in a polyhedral
  domain.
\newblock {\em Probab. Theory Related Fields}, 75(4):459--485, 1987.

\bibitem{WilliamsO}
R.~J. Williams.
\newblock Semimartingale reflecting {B}rownian motions in the orthant.
\newblock In {\em Stochastic networks}, volume~71 of {\em IMA Vol. Math.
  Appl.}, pages 125--137. Springer, New York, 1995.

\end{thebibliography}
\end{document}